\documentclass[11pt]{article}

\usepackage[a4paper,margin=31mm]{geometry}
\usepackage{amsmath,amssymb,amsthm,mathtools}
\usepackage[T1]{fontenc}
\usepackage{lmodern}
\usepackage{microtype}
\usepackage{enumitem}
\usepackage[colorlinks=true,linkcolor=blue,citecolor=blue,urlcolor=blue]{hyperref}

\newtheorem{theorem}{Theorem}[section]

\newtheorem{lemma}[theorem]{Lemma}
\newtheorem{corollary}[theorem]{Corollary}
\newtheorem{conj}[theorem]{Conjecture}
\theoremstyle{definition}

\theoremstyle{remark}

\newcommand{\R}{\mathbb R}
\newcommand{\Hh}{\mathcal H}

\newcommand{\ip}[2]{\langle #1,#2\rangle}
\newcommand{\norm}[1]{\lVert #1\rVert_2}

\title{Sharp projection inequalities for antichains in cone orders}
\author{
Yuxian Dong\\
\small School of Mathematical Sciences, Dalian University of Technology\\
\small Dalian 116024, China
\and
Lili Mu\thanks{Corresponding author. Email: \texttt{lilimu@jsnu.edu.cn}.}\\
\small School of Mathematics and Statistics, Jiangsu Normal University\\
\small Xuzhou 221116, China
}
\date{}

\begin{document}
\maketitle

\begin{abstract}
We establish sharp projection inequalities for $k$-antichains under orders defined by open convex cones.
For integers $n\geq2$ and $k\ge 1$, let $C\subsetneq\mathbb R^n$ be a nonempty open convex cone.
A set $A\subseteq\mathbb R^n$ is a $k$-antichain with respect to the order $x\prec_C y\iff y-x\in C$ if it contains no chain $x^1\prec_C\cdots\prec_C x^{k+1}$.
For every such set $A$ and every $v\in C$, we prove
\[
\mathcal H^{n-1}(A)
\leq k\frac{\|v\|}{\alpha_C(v)}
\mathcal H^{n-1}(P_v(A)),
\]
where $P_v$ is the orthogonal projection onto $v^\perp$, $\mathcal H^{n-1}$ denotes normalized Hausdorff outer measure, and $\alpha_C(v)$ is the minimum of $\langle\xi,v\rangle$ over unit vectors in the dual cone $C^*$.
In particular, every weak antichain $A\subseteq\mathbb R^n$ satisfies $\mathcal H^{n-1}(A)\leq\mathcal H^{n-1}(\Delta_i(A))$ for each $i$, where $\Delta_i(x)=(x_j-x_i)_{j\ne i}$.
This proves the skewed projection conjecture of Engel, Mitsis, Pelekis and Reiher.
As an application, we recover the bound $\mathcal H^{n-1}(A)\leq kn$ for $k$-antichains in the unit cube under the coordinatewise order.
\end{abstract}

\noindent
{\textbf{MSC 2020:}} 
05D05, 28A75, 06A07

\noindent
{\textbf{Keywords:}}
$k$-antichains, Weak antichains, convex cones, Hausdorff measure,
skewed projections.

\section{Introduction}

Let $n\geq2$ be an integer and set $[n]=\{1,\ldots,n\}$.
For $x,y\in\R^n$, write $x\ll y$ if $x_i<y_i$ for every $i\in[n]$.
A set $A\subseteq\R^n$ is called a \emph{weak antichain} if there are no $x,y\in A$ such that $x\ll y$. 

We measure the size of weak antichains using normalized Hausdorff outer measure.
For an integer $s\geq1$, let $\omega_s$ denote the volume of the unit ball in $\R^s$. 
If $E$ is a subset of a Euclidean space and $\delta>0$, define
\[
\Hh^s_\delta(E)
=
\inf\left\{
\sum_{r=1}^{\infty}
\omega_s\left(\frac{\operatorname{diam}U_r}{2}\right)^s
:
E\subseteq\bigcup_{r=1}^{\infty}U_r,\quad
\operatorname{diam}U_r\leq\delta \text{ for all } r
\right\},
\]
where the infimum is taken over all countable coverings $\{U_r\}_{r\ge1}$ of $E$, and $\operatorname{diam}U_r$ denotes the Euclidean diameter of $U_r$.
Set $\Hh^s(E)=\lim_{\delta\rightarrow0}\Hh^s_\delta(E)$.
This is the normalized $s$-dimensional Hausdorff outer measure~\cite[Definition~2.1]{EvansGariepy}.
In $\R^s$, $\Hh^s$ agrees with $s$-dimensional Lebesgue outer measure~\cite[Theorem~2.5]{EvansGariepy}.
A subset $B$ of a Euclidean space is called $\Hh^s$-measurable if
\[
\Hh^s(E)
=
\Hh^s(E\cap B)+\Hh^s(E\setminus B)
\]
for every subset $E$ of that space~\cite[Definition~1.3]{EvansGariepy}.

For $i\in[n]$, let $\pi_i:\R^n\to\R^{n-1}$ be the coordinate projection $\pi_i(x)=(x_j)_{j\ne i}$.
Engel, Mitsis, Pelekis and Reiher~\cite[Theorem~1.3]{EMPR} proved that every weak antichain $A\subseteq[0,1]^n$ satisfies
\begin{equation}\label{eq:EMPR-orthogonal}
\Hh^{n-1}(A)
\leq 
\sum_{i=1}^n \Hh^{n-1}(\pi_i(A)).
\end{equation}
Since each $\pi_i(A) \subseteq [0,1]^{n-1}$, we have $\Hh^{n-1}(\pi_i(A))\le1$ for every $i$.
This implies $\Hh^{n-1}(A)\leq n$. 
This bound is attained by the weak antichain
\[
F=\bigcup_{i=1}^{n}\{x\in[0,1]^n:x_i=0\}
\]
whose $(n-1)$-dimensional Hausdorff measure is $n$.

Engel, Mitsis, Pelekis and Reiher also considered the \emph{skewed projections} $\Delta_i:\R^n\to\R^{n-1}$ defined by
\[
\Delta_i(x)=(x_j-x_i)_{j\neq i}.
\]
For $A\subseteq[0,1]^n$, set
\begin{equation}\label{eq:pieces}
\underline A_i=\{x\in A:x_i=\min\{x_1,\dots,x_n\}\}.
\end{equation}
Then
\[
A=\bigcup_{i=1}^n\underline A_i,
\qquad
\Delta_i(\underline A_i)\subseteq[0,1]^{n-1}
\quad\text{for every }i\in[n].
\]

\begin{conj}[{\cite[Conjecture~5.3]{EMPR}}]
\label{conj:skewed-projection}
Every weak antichain $A\subseteq[0,1]^n$ satisfies
\begin{equation}\label{eq:original-conjecture}
\Hh^{n-1}(A)
\leq
\sum_{i=1}^n
\Hh^{n-1}\bigl(\Delta_i(\underline A_i)\bigr).
\end{equation}
\end{conj}
Engel et al.~\cite[Theorem~5.4]{EMPR} proved
Conjecture~\ref{conj:skewed-projection} in the case $n=2$.
In this paper, we prove~\eqref{eq:original-conjecture} for every
$n\geq2$ as a consequence of a more general projection inequality
for $k$-antichains under orders defined by open convex cones.

A set $S\subseteq\R^n$ is \emph{convex} if $(1-t)x+ty\in S$ for all $x,y\in S$ and $0\leq t\leq1$.
A convex set $C$ is a \emph{convex cone} if $tC\subseteq C$ for every $t>0$.
Let $C\subsetneq\R^n$ be a nonempty open convex cone.
Define
\(x\prec_C y\) if
\(y-x\in C.\)
This is a strict partial order because $0\notin C$ and $C+C\subseteq C$.
A chain is a subset whose elements are pairwise comparable.
For a positive integer $k$, a subset $A\subseteq\R^n$ is called a \emph{$k$-antichain} if it meets every chain in at most $k$ points.
Equivalently, $A$ contains no chain
\[
x^1\prec_C x^2\prec_C\cdots\prec_C x^{k+1}.
\]
By Mirsky's theorem~\cite[Theorem~2]{Mirsky}, every $k$-antichain can be partitioned into at most $k$ antichains.

When $C=(0,\infty)^n$, the relation $y-x\in C$ is equivalent to $x\ll y$, and the $1$-antichains are the weak antichains defined above.

Let $\ip{\cdot}{\cdot}$ and $\norm{\cdot}$ denote the Euclidean inner product and norm, respectively.
Define the \emph{dual cone} of $C$ by
\[
C^*=\{\xi\in\R^n:\ip{\xi}{z}\geq0
\text{ for every }z\in C\}.
\]
For $v\in C$, set $v^\perp=\{u\in\R^n:\ip{u}{v}=0\}$.
Let $P_v:\R^n\to v^\perp$ be the orthogonal projection
\[
P_v(x)=x-\frac{\ip{x}{v}}{\norm{v}^2}v.
\]
In particular, $P_v(x+tv)=P_v(x)$ for all $x\in\R^n$ and $t\in\R$.
Define
\begin{equation}\label{eq:alpha}
\alpha_C(v)=\min\{\ip{\xi}{v}:\xi\in C^*,\ \norm{\xi}=1\}.
\end{equation}
The minimum exists and is positive for every $v\in C$; see Lemma~\ref{lem:dual-cone}.

Our main result is the following.

\begin{theorem}\label{thm:cone}
Let $C\subsetneq\R^n$ be a nonempty open convex cone.
For $v\in C$ and a positive integer $k$, if $A\subseteq\R^n$ is a $k$-antichain with respect to $\prec_C$, then
\begin{equation}\label{eq:cone}
\Hh^{n-1}(A)
\leq
k\frac{\norm{v}}{\alpha_C(v)}
\Hh^{n-1}(P_v(A)).
\end{equation}
The constant $k\frac{\norm{v}}{\alpha_C(v)}$ is optimal for every $C$, $v$ and $k$.
\end{theorem}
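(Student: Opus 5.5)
We first reduce to $k=1$. By Mirsky's theorem, $A$ is the union of at most $k$ antichains $A_1,\dots,A_k$. Countable subadditivity gives $\Hh^{n-1}(A)\le\sum_j\Hh^{n-1}(A_j)$, and $P_v(A_j)\subseteq P_v(A)$ for each $j$. So it suffices to prove $\Hh^{n-1}(A)\le \frac{\norm{v}}{\alpha_C(v)}\Hh^{n-1}(P_v(A))$ for a single antichain $A$.

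The main step is to show that an antichain is the graph of a Lipschitz function over $v^\perp$ with controlled Lipschitz constant. Write $x=u+tv/\norm{v}$ with $u=P_v(x)$. If two distinct points $x,y\in A$ had the same projection, then $y-x$ would be a nonzero multiple of $v$. Since $C$ is a cone, either $y-x$ or $x-y$ lies in $C$, which contradicts the antichain property. Hence $P_v|_A$ is injective, and $A$ is the graph of a function $f:P_v(A)\to\R$.

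Now let $x,y\in A$ with $w=y-x$. Because $A$ is an antichain, $w\notin C$ and $-w\notin C$. Since $C$ is open and convex, separation gives a unit $\xi\in C^*$ with $\ip{\xi}{w}\le0$, and likewise a unit $\eta\in C^*$ with $\ip{\eta}{w}\ge 0$. Decompose $w=w_\perp+s\,v/\norm{v}$. From $\ip{\xi}{w}\le 0$ we get
\[
s\,\ip{\xi}{v}/\norm{v}\le -\ip{\xi}{w_\perp}\le\norm{w_\perp},
\]
and since $\ip{\xi}{v}\ge\alpha_C(v)$ (Lemma~\ref{lem:dual-cone}), this yields $s\le \norm{v}\norm{w_\perp}/\alpha_C(v)$. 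The vector $\eta$ gives the symmetric bound. So $|f(u)-f(u')|\le L\norm{u-u'}$ with $L=\norm{v}/\alpha_C(v)\ge1$.

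Next we bound the measure of the graph. The map $u\mapsto u+f(u)v/\norm{v}$ is Lipschitz, but the crude constant $\sqrt{1+L^2}$ is too large; we need exactly $L$. We would extend $f$ to all of $v^\perp$ by a Kirszbraun/McShane extension with the same constant. A function with $|\nabla f|\le L$ a.e. has area factor $\sqrt{1+|\nabla f|^2}\le\sqrt{1+L^2}$, which is still too weak, so we use the geometry more sharply. The antichain condition says more than a Lipschitz bound: for every unit normal direction $\nu$ of the graph at a differentiability point, the tangent hyperplane $\nu^\perp$ contains no vector of $C$ in its interior sense. Hence $\pm\nu\in C^*$ after normalization. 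The area factor then equals $\norm{v}/|\ip{\nu}{v}|\le \norm{v}/\alpha_C(v)$.

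To make this rigorous for arbitrary (non-measurable) $A$, we cover $P_v(A)$ efficiently by a set $G$ with $\Hh^{n-1}(G)$ close to $\Hh^{n-1}(P_v(A))$. The difficulty is that extending $f$ off $P_v(A)$ must preserve the antichain (cone-Lipschitz) property, not merely the Lipschitz bound. We would use the extension $F(u)=\inf_{a\in A}\{\text{lowest height above }u \text{ not below } a\}$, i.e.\ the boundary of the upset $A+\overline{C}$, which is a global graph whose normals lie in $C^*$. Then the area formula applied over $G$ gives
\[
\Hh^{n-1}(A)\le\Hh^{n-1}(\operatorname{graph}F|_G)\le L\,\Hh^{n-1}(G).
\]
We expect this extension and area-formula step to be the main obstacle.

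For optimality, we take a unit $\xi\in C^*$ attaining $\alpha_C(v)$. The hyperplane $H=\xi^\perp$ is an antichain, since $\ip{\xi}{z}>0$ for $z\in C$ because $C$ is open. Its projection onto $v^\perp$ scales area by exactly $\ip{\xi}{v}/\norm{v}$, so equality holds for $k=1$. For general $k$, we take $k$ parallel translates of a bounded piece $D\subset H$ with the same projection, shifted along $v$ by small amounts. Each chain meets each translate at most once, so the union is a $k$-antichain whose measure is $k\,\Hh^{n-1}(D)$ while its projection is unchanged.
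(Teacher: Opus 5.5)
Your reduction to $k=1$, the graph representation of an antichain over $v^\perp$, and the optimality example ($k$ translates along $v$ of a bounded piece of $\xi^\perp$, where $\xi$ attains $\alpha_C(v)$) are correct and match the paper. The trouble is the middle step, and the obstacle you identify there comes from a lossy estimate, not from the problem. When you bound $-\ip{\xi}{w_\perp}\le\norm{w_\perp}$, you ignore that $w_\perp\perp v$. Since $\ip{\xi}{w_\perp}=\ip{P_v(\xi)}{w_\perp}$, $\norm{P_v(\xi)}^2=1-\ip{\xi}{\widehat v}^2$, and $\ip{\xi}{\widehat v}\ge\alpha_C(v)/\norm{v}$, the same separation argument gives
\[
|f(u)-f(u')|\le\sqrt{\norm{v}^2/\alpha_C(v)^2-1}\;\norm{u-u'}.
\]
With this constant $L$, an ordinary McShane extension suffices, even though it preserves only the Euclidean Lipschitz constant. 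The reason is that $\sqrt{1+\norm{\nabla\widetilde f}^2}\le\sqrt{1+L^2}=\norm{v}/\alpha_C(v)$ exactly. A Borel hull of $P_v(A)$ and the area formula then finish. This is the paper's proof, and it needs no cone-preserving extension.

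Your upset-boundary route is a genuinely different way to reach the same pointwise Jacobian bound, and it can be completed. As written, however, it is only announced (you call it the main obstacle), so the proposal is not yet a proof. For $U=A+C$ and $F(u)=\inf\{t:u+t\widehat v\in U\}$, you would need to verify four things.

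(i) $F$ is real-valued. $F<\infty$ holds because $\widehat v\in C$ and $C$ is open. $F>-\infty$ needs an argument: if $u+t_m\widehat v=a_m+c_m$ with $a_m\in A$, $c_m\in C$ and $t_m\to-\infty$, then $a_1-a_m=c_m+\bigl((t_1-t_m)\widehat v-c_1\bigr)\in C$ for large $m$, contradicting the antichain property.

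(ii) $F=f$ on $P_v(A)$.

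(iii) The graph of $F$ lies in $\overline U\setminus U$. It is therefore an antichain, because $\overline U+C\subseteq U$, and so $F$ is Lipschitz by your own estimate.

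(iv) At a differentiability point, the tangent hyperplane misses the open cone $C$. Separation then yields a unit normal $\nu\in C^*$, and $J_{n-1}=\norm{v}/\ip{\nu}{v}\le\norm{v}/\alpha_C(v)$.

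Each of these holds. The route also has the side benefit of embedding $A$ in a global antichain graph over $v^\perp$. It is, however, considerably more work than the one-line sharpening of the Lipschitz constant above.
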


For $C=(0,\infty)^n$, we have $C^*=[0,\infty)^n$ and $\alpha_C(v)=\min_{j\in[n]}v_j$.
Let $v=(v_1,\ldots,v_n)\in (0,\infty)^n$ and $i\in[n]$, define
\[
\Delta_{i,v}(x)
=\left(x_j-\frac{v_j}{v_i}x_i\right)_{j\neq i}.
\]
Then $\Delta_{i,v}(x+tv)=\Delta_{i,v}(x)$ for all $x\in\R^n$ and $t\in\R$.
Theorem~\ref{thm:cone} gives the following inequality.

\begin{corollary}\label{cor:weighted-projection}
Let $k$ be a positive integer, and let $A\subseteq\R^n$ be a $k$-antichain with respect to $\ll$.
For every $v\in(0,\infty)^n$ and every $i\in[n]$,
\[
\Hh^{n-1}(A)
\leq
k\frac{v_i}{\min_{j\in[n]}v_j}
\Hh^{n-1}(\Delta_{i,v}(A)).
\]
\end{corollary}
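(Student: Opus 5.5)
The plan is to derive the corollary directly from Theorem~\ref{thm:cone} with $C=(0,\infty)^n$, and then compare the orthogonal projection $P_v$ with the skewed map $\Delta_{i,v}$. We already know that $C^*=[0,\infty)^n$ and $\alpha_C(v)=\min_j v_j$. Applying \eqref{eq:cone} gives
\[
\Hh^{n-1}(A)\le k\frac{\norm{v}}{\min_j v_j}\Hh^{n-1}(P_v(A)).
\]
It therefore suffices to show that
\[
\norm{v}\,\Hh^{n-1}(P_v(A))= v_i\,\Hh^{n-1}(\Delta_{i,v}(A)).
\]

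\textbf{Step 1: factor $\Delta_{i,v}$ through $P_v$.} Both maps are linear and have kernel $\R v$; for $\Delta_{i,v}$ this holds because $\Delta_{i,v}(x)=0$ forces $x_j=(v_j/v_i)x_i$ for every $j$. So there is a linear bijection $L:v^\perp\to\R^{n-1}$ with $\Delta_{i,v}=L\circ P_v$. Explicitly, $L$ is the restriction of $\Delta_{i,v}$ to $v^\perp$, since $\Delta_{i,v}(P_v x)=\Delta_{i,v}(x)$. Hence $\Delta_{i,v}(A)=L(P_v(A))$.

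\textbf{Step 2: compute the Jacobian of $L$.} For a linear bijection, Hausdorff outer measure scales by $|\det L|$ computed with respect to an orthonormal basis of $v^\perp$; this follows from the standard linear change-of-variables for Lebesgue outer measure together with the isometric identification of $v^\perp$ with $\R^{n-1}$.

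To find $|\det L|$, consider the map $M:\R^n\to\R^{n-1}\times\R$ given by $x\mapsto(\Delta_{i,v}(x),x_i)$. Its matrix is lower-triangular up to a permutation of rows, with unit diagonal, so $|\det M|=1$. Now decompose $\R^n=v^\perp\oplus\R\hat v$, where $\hat v=v/\norm{v}$. Then $M(\hat v)=(0,v_i/\norm{v})$, while $M$ restricted to $v^\perp$ sends $u$ to $(Lu,u_i)$. In a block decomposition adapted to these splittings, $M$ is block triangular. Therefore
\[
1=|\det M|=|\det L|\cdot\frac{v_i}{\norm{v}},
\]
which gives $|\det L|=\norm{v}/v_i$.

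\textbf{Step 3: combine.} From Step 2,
\[
\Hh^{n-1}(\Delta_{i,v}(A))=\frac{\norm{v}}{v_i}\Hh^{n-1}(P_v(A)).
\]
Substituting this into the bound from Theorem~\ref{thm:cone} yields exactly the claimed inequality.

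The only delicate points are the block-determinant bookkeeping in Step 2 and the fact that linear images scale outer (not just measurable) Hausdorff measure by $|\det|$. The latter holds because a linear bijection maps coverings to coverings, so the Lebesgue outer measure transformation law applies to arbitrary sets.
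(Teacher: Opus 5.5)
Your proposal is correct and follows the paper's route. You apply Theorem~\ref{thm:cone} with $C=(0,\infty)^n$ and $\alpha_C(v)=\min_j v_j$, write $\Delta_{i,v}=L\circ P_v$ with $L$ the restriction of $\Delta_{i,v}$ to $v^\perp$, and use that $L$ scales $\Hh^{n-1}$ by $\norm{v}/v_i$, which is exactly Lemma~\ref{lem:skewed-measure}.

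The only difference is how you compute that Jacobian. The paper takes an orthonormal basis of $v^\perp$ with $b_1=P_v(e_i)/\norm{P_v(e_i)}$ and checks that the Gram matrix of the vectors $T(b_\ell)$ is diagonal. Your block-triangular factorization of the unit-determinant map $M$ is a valid alternative. Strictly speaking, $M$ is triangular only after reordering both the domain and target coordinates so that $x_i$ comes first, not after a row permutation alone; this does not affect $|\det M|=1$.
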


Taking $v=(1,\ldots,1)$ and $k=1$ gives the following.

\begin{corollary}\label{cor:individual}
For every weak antichain $A\subseteq\R^n$ and every $i\in[n]$,
\begin{equation}\label{eq:individual}
\Hh^{n-1}(A)
\leq
\Hh^{n-1}(\Delta_i(A)).
\end{equation}
\end{corollary}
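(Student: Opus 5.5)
Corollary~\ref{cor:individual} should follow from Corollary~\ref{cor:weighted-projection} by direct specialization, with one small check: $\Delta_i$ agrees with $\Delta_{i,v}$ when $v=(1,\ldots,1)$. We take $k=1$, since a weak antichain is exactly a $1$-antichain with respect to $\ll$, that is, with respect to $\prec_C$ for $C=(0,\infty)^n$. We then take $v=(1,\ldots,1)\in(0,\infty)^n$. With these choices $v_j/v_i=1$ for all $j$, so
\[
\Delta_{i,v}(x)=\left(x_j-x_i\right)_{j\ne i}=\Delta_i(x).
\]
The constant in Corollary~\ref{cor:weighted-projection} becomes $k\,v_i/\min_j v_j=1$, and \eqref{eq:individual} follows immediately.

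For completeness, we record how Corollary~\ref{cor:weighted-projection} itself comes from Theorem~\ref{thm:cone}, since that is where the constant $1$ originates. For $C=(0,\infty)^n$ we have $C^*=[0,\infty)^n$. Thus $\alpha_C(v)=\min\{\langle \xi,v\rangle: \xi\ge 0,\ \norm{\xi}=1\}$. This minimum equals $\min_j v_j$: it is attained at a standard basis vector, and for $\xi\ge0$ with $\norm{\xi}=1$ we have $\sum_j\xi_j\ge 1$, so $\langle\xi,v\rangle\ge\min_j v_j\sum_j\xi_j\ge\min_j v_j$.

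The real step is to compare $\Hh^{n-1}(P_v(A))$ with $\Hh^{n-1}(\Delta_{i,v}(A))$. Both maps are constant along lines parallel to $v$. Hence there is a linear bijection $L:v^\perp\to\R^{n-1}$ with $\Delta_{i,v}=L\circ P_v$, namely the restriction of $\Delta_{i,v}$ to $v^\perp$. A linear bijection scales $\Hh^{n-1}$ by $|\det L|$. To compute this determinant, we view $L^{-1}$ as a graph parametrization, rewritten over the coordinate hyperplane $\{x_i=0\}$. The projection of $v^\perp$ onto that hyperplane has Jacobian $|\cos\theta|=v_i/\norm{v}$, where $\theta$ is the angle between the normal $v$ and $e_i$. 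This gives
\[
\Hh^{n-1}(\Delta_{i,v}(E))=\frac{v_i}{\norm{v}}\Hh^{n-1}(P_v(E)).
\]
Substituting into \eqref{eq:cone} turns the factor $\norm{v}/\alpha_C(v)$ into $v_i/\min_j v_j$.

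The only point needing care is this determinant identity; the specialization to Corollary~\ref{cor:individual} is immediate. One way to verify it is the following. Restricted to $\{x_i=0\}$, the map $\Delta_{i,v}$ is the identity. Also, $\Delta_{i,v}$ factors through the oblique projection along $v$ onto $\{x_i=0\}$. So $|\det L|$ is the area ratio of this oblique projection, which by the standard cosine formula equals $v_i/\norm{v}$.
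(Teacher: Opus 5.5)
Your main argument is correct and is exactly the paper's proof. You take $k=1$ and $v=(1,\ldots,1)$ in Corollary~\ref{cor:weighted-projection}, note that $\Delta_{i,v}=\Delta_i$ and $v_i/\min_j v_j=1$, and conclude. Your computation $\alpha_C(v)=\min_j v_j$ also matches the paper.

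The ``for completeness'' derivation of Corollary~\ref{cor:weighted-projection} has a genuine error, and it sits in the step you call the only point needing care: the Jacobian is inverted. The correct identity, which is the paper's Lemma~\ref{lem:skewed-measure}, is
\[
\Hh^{n-1}(\Delta_{i,v}(E))=\frac{\norm{v}}{v_i}\,\Hh^{n-1}(P_v(E)),
\]
not $\frac{v_i}{\norm{v}}\Hh^{n-1}(P_v(E))$.

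The cosine factor $v_i/\norm{v}$ is the area ratio of an \emph{orthogonal} projection between the hyperplanes $v^\perp$ and $\{x_i=0\}$. Examples are $\pi_i$ restricted to $v^\perp$, or $P_v$ restricted to $\{x_i=0\}$. Your map $L=\Delta_{i,v}|_{v^\perp}$ is, as you say, the oblique projection along $v$ onto $\{x_i=0\}$. But $v$ is the normal of the \emph{source} hyperplane, so $L$ is the inverse of the orthogonal projection $P_v|_{\{x_i=0\}}$. It therefore expands area by $\norm{v}/v_i$ rather than contracting it. A quick check: take $n=2$, $v=(1,1)$ and $i=1$. The map $\Delta_1(x)=x_2-x_1$ sends the unit segment $\{t(1,-1)/\sqrt2:0\le t\le 1\}\subseteq v^\perp$ onto $[-\sqrt2,0]$, which has length $\sqrt2$.

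This is not cosmetic. If you substitute your displayed identity into~\eqref{eq:cone}, the constant becomes $k\norm{v}^2/(v_i\min_j v_j)$. For $v=(1,\ldots,1)$ and $k=1$ that constant is $n$, not $1$, so it would not give~\eqref{eq:individual}. Your next sentence, that the factor turns into $v_i/\min_j v_j$, is consistent only with the corrected identity.

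Since the statement itself needs only Corollary~\ref{cor:weighted-projection} as already established, your proof of the corollary stands once this aside is corrected or dropped.
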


Corollary~\ref{cor:individual} proves the conjecture of Engel, Mitsis, Pelekis and Reiher~\cite[Conjecture~5.3]{EMPR} for every $n\geq2$.
Indeed, if $A\subseteq[0,1]^n$ is a weak antichain, then the sets $\underline A_i$ defined in~\eqref{eq:pieces} cover $A$, and each is a weak antichain.
By subadditivity and~\eqref{eq:individual},
\[
\Hh^{n-1}(A)
\leq
\sum_{i=1}^n\Hh^{n-1}(\underline A_i)
\leq
\sum_{i=1}^n
\Hh^{n-1}\bigl(\Delta_i(\underline A_i)\bigr).
\]

We also recover the bound of Pelekis and Vlas\'ak~\cite{PelekisVlasak} for $k$-antichains in the unit cube.
Here we use the coordinatewise order
\[
x\preceq y
\quad\Longleftrightarrow\quad
x_i\leq y_i \text{ for every }i\in[n].
\]
Since every chain under $\ll$ is also a chain under $\preceq$, a $k$-antichain under $\preceq$ is a $k$-antichain under $\ll$.
For such a set $A\subseteq[0,1]^n$, apply Corollary~\ref{cor:weighted-projection} with $v=(1,\ldots,1)$ to each $\underline A_i$.
Using $\Delta_i(\underline A_i)\subseteq[0,1]^{n-1}$, we obtain
\[
\Hh^{n-1}(A)
\leq
\sum_{i=1}^n\Hh^{n-1}(\underline A_i)
\leq
k\sum_{i=1}^n
\Hh^{n-1}\bigl(\Delta_i(\underline A_i)\bigr)
\leq kn.
\]

Section~\ref{sec:preliminaries} establishes the Borel hull and cone geometry results used in the proof.
Section~\ref{sec:cone-proof} proves Theorem~\ref{thm:cone} and its projection corollaries.
Section~\ref{sec:cube} applies these inequalities to $k$-antichains in the unit cube.

\section{Borel hulls and the geometry of cone antichains}
\label{sec:preliminaries}

We first recall the definitions and standard results used in the proof.
For a point $u$ in a Euclidean space and $r>0$, let $B(u,r)$ denote the open ball of radius $r$ centered at $u$.

A \emph{$\sigma$-algebra} on $\R^m$ is a family of subsets of $\R^m$ that contains $\R^m$ and is closed under complements and countable unions~\cite[Definition~1.4]{EvansGariepy}.
The \emph{Borel $\sigma$-algebra} $\mathcal B(\R^m)$ is the intersection of all $\sigma$-algebras on $\R^m$ that contain every open subset of $\R^m$.
Its elements are called \emph{Borel sets};
see Evans and Gariepy~\cite[Definition~1.6]{EvansGariepy}.
In particular, every open or closed set is Borel, and Borel sets are closed under countable unions and countable intersections.

\begin{theorem}[{\cite[Theorem~2.1]{EvansGariepy}}]
\label{thm:borel-hull}
Let $V$ be an $m$-dimensional Euclidean space, where $m\geq1$. 
Every Borel subset of $V$ is $\Hh^m$-measurable.
For every $E\subseteq V$, there is a Borel set $M\subseteq V$ such that
\[
E\subseteq M,
\qquad
\Hh^m(M)=\Hh^m(E).
\]
\end{theorem}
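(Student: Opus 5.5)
This is the standard Borel regularity of Hausdorff measure, and I would prove it in two parts: first that $\Hh^m$ is a metric outer measure, which by Carath\'eodory's criterion makes every Borel set $\Hh^m$-measurable; then a direct construction of the Borel hull from near-optimal coverings by closed sets. Throughout I use that $\Hh^m$ is an outer measure (countably subadditive and monotone), which follows from the definition since each $\Hh^m_\delta$ is, and $\Hh^m=\sup_\delta\Hh^m_\delta$.

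\emph{Metric outer measure.} Let $E,F\subseteq V$ with $d=\operatorname{dist}(E,F)>0$, and take $\delta<d$. Any set $U$ of diameter at most $\delta$ in a $\delta$-cover of $E\cup F$ meets at most one of $E$ and $F$. So the cover splits into a cover of $E$ and a cover of $F$, and
\[
\Hh^m_\delta(E\cup F)\geq\Hh^m_\delta(E)+\Hh^m_\delta(F).
\]
Letting $\delta\to0$ and combining with subadditivity gives $\Hh^m(E\cup F)=\Hh^m(E)+\Hh^m(F)$.

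\emph{Carath\'eodory's criterion.} I then show that every closed set $K$ is measurable. Fix a test set $E$ with $\Hh^m(E)<\infty$; otherwise there is nothing to prove. Set
\[
E_j=\{x\in E\setminus K:\operatorname{dist}(x,K)\geq 1/j\}.
\]
Then $\operatorname{dist}(E_j, E\cap K)\geq1/j>0$, so the metric property gives
\[
\Hh^m(E\cap K)+\Hh^m(E_j)\leq\Hh^m(E).
\]
It remains to show $\Hh^m(E_j)\to\Hh^m(E\setminus K)$. Since $K$ is closed, $E\setminus K=\bigcup_j E_j$. Decompose this union into the annuli
\[
R_j=\{x\in E\setminus K:1/(j+1)\leq\operatorname{dist}(x,K)<1/j\}.
\]
Non-adjacent annuli are at positive distance from each other. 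Hence, by the metric property, both $\sum_j\Hh^m(R_{2j})$ and $\sum_j\Hh^m(R_{2j+1})$ are bounded by $\Hh^m(E)<\infty$. It follows that
\[
\Hh^m(E\setminus K)\leq\Hh^m(E_j)+\sum_{i\geq j}\Hh^m(R_i)\to\lim_j\Hh^m(E_j).
\]
I expect this annulus argument to be the main technical step. Once it is done, the measurable sets form a $\sigma$-algebra containing the closed sets, and hence all Borel sets.

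\emph{Borel hull.} If $\Hh^m(E)=\infty$, take $M=V$. Otherwise, for each $j\geq1$ choose a cover $\{U_r^j\}_r$ of $E$ with $\operatorname{diam}U_r^j\leq1/j$ and
\[
\sum_r\omega_m\left(\frac{\operatorname{diam}U^j_r}{2}\right)^m\leq\Hh^m_{1/j}(E)+\frac1j.
\]
Replacing each $U_r^j$ by its closure does not change its diameter, so we may assume every $U_r^j$ is closed. Put $F_j=\bigcup_rU_r^j$, a Borel set containing $E$, and let $M=\bigcap_jF_j$, which is Borel and contains $E$. For each $j$, the closed sets $\{U_r^j\}_r$ also cover $M$, so
\[
\Hh^m_{1/j}(M)\leq\Hh^m_{1/j}(E)+\frac1j\leq\Hh^m(E)+\frac1j.
\]
Letting $j\to\infty$ gives $\Hh^m(M)\leq\Hh^m(E)$. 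Monotonicity gives the reverse inequality, so $\Hh^m(M)=\Hh^m(E)$.
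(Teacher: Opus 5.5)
Your argument is correct and is essentially the standard proof from the source the paper cites (Evans--Gariepy, Theorem~2.1, together with their proof of Carath\'eodory's criterion); the paper itself gives no proof and simply quotes this result. Metric additivity of $\Hh^m$, the annulus argument for closed sets, and Borel regularity via near-optimal $1/j$-covers by closed sets are exactly the steps used there, and the details you leave implicit (such as iterating the metric property over finite unions of the even- or odd-indexed annuli) are routine.
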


We call such an $M$ an \emph{equal-measure Borel hull} of $E$.
The following separating hyperplane theorem will be used to establish the properties of the dual cone.

\begin{theorem}[{\cite[Section~2.5.1, p.~46]{BoydVandenberghe}}]
\label{thm:separation}
Let $D,E\subseteq\R^n$ be nonempty disjoint convex sets.
Then there exist a nonzero vector $\xi\in\R^n$ and a real number $a$ such that
\[
\ip{\xi}{x}\leq a\leq\ip{\xi}{y}
\qquad\text{for every }x\in D\text{ and }y\in E.
\]
\end{theorem}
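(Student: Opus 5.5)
The plan is to reduce the statement to separating the origin from a single convex set, and then split into cases according to whether that set has nonempty interior. Put $S=E-D=\{y-x:x\in D,\ y\in E\}$. Since $D$ and $E$ are convex and nonempty, $S$ is a nonempty convex set, and $0\notin S$ because $D\cap E=\emptyset$. It suffices to find a nonzero $\xi$ with $\ip{\xi}{s}\ge0$ for every $s\in S$. Then $\ip{\xi}{x}\le\ip{\xi}{y}$ for all $x\in D$ and $y\in E$. Hence $a=\sup_{x\in D}\ip{\xi}{x}$ is finite, being bounded above by $\ip{\xi}{y}$ for any fixed $y\in E$, and it satisfies $\ip{\xi}{x}\le a\le\ip{\xi}{y}$ for all $x\in D$, $y\in E$.

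First suppose the affine hull of $S$ is a proper affine subspace of $\R^n$. Then there are a nonzero $\xi$ and a constant $c$ with $\ip{\xi}{s}=c$ for all $s\in S$. Replacing $\xi$ by $-\xi$ if necessary, we get $c\ge0$, and we are done.

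Otherwise $S$ has nonempty interior. I will use two standard facts about convex sets in this situation.
\begin{itemize}
\item The closure $\overline S$ is convex.
\item The interior of $\overline S$ equals the interior of $S$. To see this, take an interior point $s_0$ of $S$ and a point $p$ in the interior of $\overline S$. Push slightly past $p$ to a point $q=p+\epsilon(p-s_0)\in\overline S$. Then $p$ is a convex combination of $s_0$ and $q$ with positive weight on $s_0$, and such a point lies in the interior of $S$, by approximating $q$ from within $S$.
\end{itemize}
Since $0\notin S$, it follows that $0$ is not an interior point of $\overline S$. So there are points $z_m\notin\overline S$ with $z_m\to0$.

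For each $m$, let $p_m$ be the nearest point of the closed convex set $\overline S$ to $z_m$; it exists and is unique by compactness and strict convexity of the norm. The usual variational inequality for nearest points gives
\[
\ip{p_m-z_m}{s-p_m}\ge0\quad\text{for all } s\in\overline S.
\]
Set $\xi_m=(p_m-z_m)/\norm{p_m-z_m}$. Then
\[
\ip{\xi_m}{s}\ge\ip{\xi_m}{p_m}=\ip{\xi_m}{z_m}+\norm{p_m-z_m}\ge\ip{\xi_m}{z_m}\quad\text{for all } s\in\overline S .
\]
The $\xi_m$ are unit vectors, so a subsequence converges to some unit vector $\xi$. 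Passing to the limit, and using $z_m\to0$, gives $\ip{\xi}{s}\ge0$ for all $s\in S$.

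The main obstacle is that $D$ and $E$ need not be closed, so $0$ may lie in $\overline S$. This rules out plain strict separation by projecting $0$ onto $\overline S$. It is handled by the case split above: the degenerate case is covered by the affine-hull argument, and in the full-dimensional case the identity between the interiors of $\overline S$ and $S$ lets us approach $0$ from outside $\overline S$ and take a limit of separating normals.
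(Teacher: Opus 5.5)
Your proof is correct. The paper itself does not prove this statement; it quotes it from Boyd--Vandenberghe. They prove only the case in which the distance between the two sets is attained (by nearest points) and leave the general case as an exercise. Your argument is a complete version of the standard route for that general case: reduce to separating $0$ from $S=E-D$, handle the lower-dimensional case with a hyperplane containing $S$, and in the full-dimensional case use $\operatorname{int}\overline S=\operatorname{int}S$ to approach $0$ from outside $\overline S$ and pass to a limit of unit nearest-point normals.

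Two small remarks. First, the case split is not actually needed. If $\operatorname{aff}S\neq\R^n$, then $\overline S\subseteq\operatorname{aff}S$ has empty interior, so $0\notin\operatorname{int}\overline S$ holds trivially and your limiting argument applies verbatim. The split only isolates where the lemma $\operatorname{int}\overline S=\operatorname{int}S$ is used. Second, in the paper's only application (Lemma~\ref{lem:cone-separation}, with $D=\{d\}$ and $E=C$ open), the set $S=C-d$ is open and nonempty. So your second case is the only one that can occur, and your lemma gives $\operatorname{int}\overline S=S\not\ni 0$ directly.
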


\begin{lemma}
\label{lem:cone-separation}
Let $C\subsetneq\R^n$ be a nonempty open convex cone.
For every $d\in\R^n\setminus C$, there is a vector $\xi\in C^*$ such that
\[
\norm{\xi}=1,
\qquad
\ip{\xi}{d}\leq0.
\]
\end{lemma}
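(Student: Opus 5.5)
The plan is to apply the separating hyperplane theorem (Theorem~\ref{thm:separation}) to the disjoint convex sets $D=\{d\}$ and $E=C$, and then upgrade the resulting inequality using the cone structure. Since $d\notin C$, these sets are nonempty, disjoint and convex. So Theorem~\ref{thm:separation} yields a nonzero $\xi\in\R^n$ and $a\in\R$ with
\[
\ip{\xi}{d}\leq a\leq\ip{\xi}{y}\qquad\text{for every }y\in C.
\]
After rescaling $\xi$ and $a$ by the positive factor $1/\norm{\xi}$, we may assume $\norm{\xi}=1$.

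The next step shows that $\xi\in C^*$ and that $a\le 0$, using only the scaling invariance $tC\subseteq C$ for $t>0$.

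\emph{$\xi\in C^*$.} Fix $z\in C$. For every $t>0$ we have $tz\in C$, hence $t\ip{\xi}{z}\geq a$. If $\ip{\xi}{z}<0$, letting $t\to\infty$ makes the left side tend to $-\infty$, which contradicts the lower bound $a$. Thus $\ip{\xi}{z}\ge0$ for all $z\in C$.

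\emph{$a\le0$.} Fix any $z\in C$, which exists because $C$ is nonempty. For $t>0$ we have $a\leq t\ip{\xi}{z}$. Letting $t\to0^+$ gives $a\leq0$.

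Combining these, $\ip{\xi}{d}\leq a\leq 0$, as required.

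The only point needing care is that Theorem~\ref{thm:separation} is stated with non-strict inequalities and places no requirement on $a$. The sign of $a$ therefore has to be extracted from the cone property, and the scaling argument above does this. Note that neither openness of $C$ nor $C\neq\R^n$ is needed here beyond guaranteeing that some $d\notin C$ exists. I expect no step to be a real obstacle.
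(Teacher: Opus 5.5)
Your proposal is correct and follows the paper's proof essentially step for step. You separate $\{d\}$ from $C$, use $t\to\infty$ to get $\xi\in C^*$, and use $t\to0^+$ to get $a\leq 0$; the only cosmetic difference is that you normalize $\xi$ at the start, whereas the paper normalizes at the end.
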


\begin{proof}
Let $d\notin C$.
The sets $\{d\}$ and $C$ are nonempty, disjoint and convex.
By Theorem~\ref{thm:separation}, there exist $\eta\in\R^n\setminus\{0\}$ and $a\in\R$ such that 
\[
\ip{\eta}{d}\leq a\leq\ip{\eta}{z}
\qquad\text{for every }z\in C.
\]
Since $C$ is a cone, $tz\in C$ for every $t>0$.
The separation inequality therefore gives $a\leq t\ip{\eta}{z}$ for every $t>0$.
If $\ip{\eta}{z}<0$, then $t\ip{\eta}{z}\to-\infty$ as $t\to\infty$, contradicting this inequality.
Thus $\ip{\eta}{z}\geq0$.
Since $z\in C$ was arbitrary, the definition of the dual cone gives $\eta\in C^*$.

Next, choose any fixed $z\in C$ and let $t\rightarrow0$ in $a\leq t\ip{\eta}{z}$.
This gives $a\leq0$, and hence $\ip{\eta}{d}\leq a\leq0$.
Taking $\xi=\frac{\eta}{\norm{\eta}}$ proves the assertion.
\end{proof}

A subset $K\subseteq\R^n$ is called \emph{compact} if every covering of $K$ by open sets has a finite subcover.
By the Heine--Borel theorem, a subset of $\R^n$ is compact if and only if it is closed and bounded.
By the extreme value theorem, every continuous real-valued function on a nonempty compact set attains its minimum and maximum.

\begin{lemma}\label{lem:dual-cone}
Let $C\subsetneq\R^n$ be a nonempty open convex cone.
Then $C^*$ contains a nonzero vector, and
\begin{equation}\label{eq:strict-duality}
\ip{\xi}{z}>0
\qquad\text{for every }z\in C
\text{ and every }\xi\in C^*\setminus\{0\}.
\end{equation}
Consequently, the minimum in~\eqref{eq:alpha} exists and 
\[
0<\alpha_C(v)\leq\norm{v}
\qquad\text{for every }v\in C.
\]
\end{lemma}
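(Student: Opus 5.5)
The plan is to treat the three assertions of Lemma~\ref{lem:dual-cone} in order: existence of a nonzero dual vector, the strict positivity \eqref{eq:strict-duality}, and then attainment and bounds for the minimum in \eqref{eq:alpha}.

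\emph{Existence of a nonzero vector in $C^*$.} Since $C\subsetneq\R^n$, there is a point $d\in\R^n\setminus C$. Lemma~\ref{lem:cone-separation} applied to $d$ gives $\xi\in C^*$ with $\norm{\xi}=1$. In particular $C^*\setminus\{0\}$ is nonempty, and so is the set $\{\xi\in C^*:\norm{\xi}=1\}$ over which the minimum is taken.

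\emph{Strict positivity.} Let $\xi\in C^*\setminus\{0\}$ and $z\in C$. By definition $\ip{\xi}{z}\geq0$. Since $C$ is open, there is $\varepsilon>0$ with $B(z,\varepsilon)\subseteq C$. Take the point $z'=z-\frac{\varepsilon}{2}\frac{\xi}{\norm{\xi}}$, which lies in this ball and hence in $C$. Then
\[
0\leq\ip{\xi}{z'}=\ip{\xi}{z}-\tfrac{\varepsilon}{2}\norm{\xi},
\]
so $\ip{\xi}{z}\geq\frac{\varepsilon}{2}\norm{\xi}>0$. This proves \eqref{eq:strict-duality}. This perturbation step is the only point with real content: it is exactly where openness of $C$ is used.

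\emph{Existence and bounds for the minimum.} The set $S=\{\xi\in C^*:\norm{\xi}=1\}$ is closed, because $C^*$ is an intersection of closed half-spaces $\{\xi:\ip{\xi}{z}\geq0\}$ and the unit sphere is closed. It is also bounded and, by the first step, nonempty, so it is compact by Heine--Borel. The map $\xi\mapsto\ip{\xi}{v}$ is continuous, so the extreme value theorem gives a minimizer $\xi_0\in S$. For $v\in C$, \eqref{eq:strict-duality} yields $\alpha_C(v)=\ip{\xi_0}{v}>0$. Finally, Cauchy--Schwarz gives $\ip{\xi_0}{v}\leq\norm{\xi_0}\norm{v}=\norm{v}$, hence $0<\alpha_C(v)\leq\norm{v}$.
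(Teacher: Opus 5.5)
Your proposal is correct and follows the paper's proof essentially step for step. You obtain a unit dual vector from Lemma~\ref{lem:cone-separation}, prove strict positivity by perturbing $z$ in the direction $-\xi$ inside the open cone, and then use compactness of $\{\xi\in C^*:\norm{\xi}=1\}$, the extreme value theorem and Cauchy--Schwarz; the only difference is immaterial, namely that you normalize the perturbation to get $\ip{\xi}{z}\geq\frac{\varepsilon}{2}\norm{\xi}$ while the paper uses $z-\varepsilon\xi$ to get $\ip{\xi}{z}\geq\varepsilon\norm{\xi}^2$.
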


\begin{proof}
Since $C\subsetneq\R^n$, there exists $d\in\R^n\setminus C$.
By Lemma~\ref{lem:cone-separation}, $C^*$ contains a unit vector.
Fix $z\in C$ and $\xi\in C^*\setminus\{0\}$.
Since $C$ is open, $z-\varepsilon\xi\in C$ for all sufficiently small $\varepsilon>0$.
Choose such an $\varepsilon$.
The definition of $C^*$ gives
\[
0\leq\ip{\xi}{z-\varepsilon\xi}
=\ip{\xi}{z}-\varepsilon\norm{\xi}^2.
\]
Hence $\ip{\xi}{z}\geq\varepsilon\norm{\xi}^2>0$.
This proves~\eqref{eq:strict-duality}.

Set $K=\{\xi\in C^*:\norm{\xi}=1\}$.
The unit vector obtained above shows that $K$ is nonempty.
For each $z\in C$, the function $\xi\mapsto\ip{\xi}{z}$ is continuous, so $\{\xi\in\R^n:\ip{\xi}{z}\geq0\}$ is closed.
Therefore
\[
C^*=
\bigcap_{z\in C}
\{\xi\in\R^n:\ip{\xi}{z}\geq0\}
\]
is closed.
The unit sphere is also closed, so $K$ is closed.
Moreover, every vector in $K$ has norm one, so $K$ is bounded.
By the Heine--Borel theorem, $K$ is compact.

Now fix $v\in C$.
The function $\xi\mapsto\ip{\xi}{v}$ is continuous on the nonempty compact set $K$.
By the extreme value theorem, it attains its minimum at some $\xi_v\in K$.
Thus the minimum in~\eqref{eq:alpha} exists and $\alpha_C(v)=\ip{\xi_v}{v}$.
Since $\xi_v\in C^*\setminus\{0\}$ and $v\in C$, equation~\eqref{eq:strict-duality} gives $\alpha_C(v)>0$.
Finally, the Cauchy--Schwarz inequality and
$\norm{\xi_v}=1$ give
\[
\alpha_C(v)=\ip{\xi_v}{v}
\leq\norm{\xi_v}\norm{v}
=\norm{v}.
\]
This completes the proof.
\end{proof}

Let $C\subsetneq\R^n$ be a nonempty open convex cone.
Fix $v\in C$ and set $\widehat v=\frac{v}{\norm{v}}$.
Let $A\subseteq \R^n$ be an antichain with respect to $\prec_C$, and put $E=P_v(A)$.
We show that the restriction $P_v|_A$ is injective.
Suppose, to the contrary, that there exist distinct points $x,y\in A$ such that $P_v(x)=P_v(y)$.
Since $P_v$ is the orthogonal projection onto $v^\perp$, its kernal is the line spanned by $v$.
Thus $x-y=\lambda v$ for some nonzero scalar $\lambda\in\R$.
Since $C$ is a cone containing $v$, we have $x-y\in C$ if $\lambda>0$, and $y-x\in C$ if $\lambda<0$.
This contradicts the fact that $A$ is an antichain with respect to $\prec_C$.

Since $P_v|_A$ is injective and $E=P_v(A)$, each $u\in E$ has a unique preimage $x\in A$ satisfying $P_v(x)=u$.
Define $f:E\to \R$ by 
\begin{equation}\label{eq:graph-function}
f(u)=\ip{x}{\widehat v}
\qquad(u\in E).
\end{equation}
By the definition of $P_v$, $x=P_v(x)+\ip{x}{\widehat v}\widehat v$, where $P_v(x)$ is perpendicular to $v$ and $\ip{x}{\widehat v}\widehat v$ is parallel to $v$.
Since $P_v(x)=u$ and $f(u)=\ip{x}{\widehat v}$, this gives $x=u+f(u)\widehat v$.
Define $F:E\to\R^n$ by $F(u)=u+f(u)\widehat v$.
Then
\begin{equation}\label{eq:graph}
A=F(E)=\{u+f(u)\widehat v:u\in E\}.
\end{equation}
The uniqueness of the preimage $x$ shows that $f$ is uniquely determined by $A$ for the fixed vector $v$.

Let $D\subseteq v^\perp$.
A function $\varphi:D\to\R$ is called \emph{Lipschitz} if there exists a constant $L\geq0$ such that
\[
|f(u)-f(w)|\leq L\norm{u-w}
\qquad\text{for every }u,w\in D.
\]
Such a function is also called \emph{$L$-Lipschitz}; see~\cite[Definition~3.1]{EvansGariepy}.

\begin{lemma}\label{lem:graph}
The function $f:E\to\R$ defined by~\eqref{eq:graph-function} satisfies
\begin{equation}\label{eq:graph-lipschitz}
|f(w)-f(u)|
\leq
\sqrt{\frac{\norm{v}^2}{\alpha_C(v)^2}-1}\,
\norm{w-u}
\qquad\text{for every }u,w\in E.
\end{equation}
In particular, $f$ is Lipschitz.
\end{lemma}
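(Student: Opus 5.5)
Fix $u,w\in E$ with preimages $x=F(u)$ and $y=F(w)$ in $A$. Set $h=\norm{w-u}$ and $t=f(w)-f(u)$, so that $y-x=(w-u)+t\widehat v$, where $w-u\perp\widehat v$. The plan is to argue by contradiction. Suppose $|t|>\beta h$ with $\beta=\sqrt{\norm{v}^2/\alpha_C(v)^2-1}$. By swapping $u$ and $w$ if necessary, we may assume $t>0$. We will show that then $d:=y-x\in C$, so $x\prec_C y$. This contradicts the antichain property, since $x\neq y$ (because $t\neq 0$).

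\textbf{Showing $d\in C$.} Suppose instead that $d\notin C$. Lemma~\ref{lem:cone-separation} gives a unit vector $\xi\in C^*$ with $\ip{\xi}{d}\le0$. Decompose $\xi=s\widehat v+\eta$ with $\eta\perp\widehat v$. Then $s=\ip{\xi}{\widehat v}\ge\alpha_C(v)/\norm{v}=:c$, by the definition~\eqref{eq:alpha} of $\alpha_C(v)$. Also $c>0$ by Lemma~\ref{lem:dual-cone}, and $\norm{\eta}=\sqrt{1-s^2}$. Using Cauchy--Schwarz on the component orthogonal to $v$,
\[
0\ge\ip{\xi}{d}=st+\ip{\eta}{w-u}\ge st-\sqrt{1-s^2}\,h .
\]
Hence $t\le h\sqrt{1-s^2}/s$. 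The function $s\mapsto\sqrt{1-s^2}/s$ is decreasing on $(0,1]$ and $s\ge c$, so
\[
t\le h\sqrt{1-c^2}/c=\beta h .
\]
This contradicts $t>\beta h$. Therefore $d\in C$.

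Combining the two steps gives~\eqref{eq:graph-lipschitz}. Lemma~\ref{lem:dual-cone} gives $c\le1$, so $\beta$ is a well-defined real number and $f$ is Lipschitz.

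The main subtlety is the edge case $h=0$ with $t>0$. There the argument forces $t\le0$, a contradiction, which is consistent with injectivity of $P_v|_A$. Apart from this, the only care needed is the monotonicity step, which uses $s\ge c>0$. I expect no real obstacle: the key idea is simply to use the separating functional from Lemma~\ref{lem:cone-separation} together with the lower bound $\ip{\xi}{\widehat v}\ge c$.
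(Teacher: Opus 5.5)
Your proposal is correct and follows the paper's proof: both apply Lemma~\ref{lem:cone-separation} to $d=F(w)-F(u)\notin C$, use $\langle\xi,\widehat v\rangle\ge\alpha_C(v)/\norm{v}>0$, and bound the component of $\xi$ orthogonal to $v$ by Cauchy--Schwarz. The only difference is your double contradiction, which is just the contrapositive of the paper's direct argument: when $f(w)>f(u)$, the antichain property gives $d\notin C$ immediately, so the outer assumption $|t|>\beta h$ can be dropped.
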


\begin{proof}
Fix $u,w\in E$ and $v\in C$.
Suppose first that $f(w)>f(u)$.
By the definition of $F$,
\[
F(w)-F(u)
=(w-u)+\bigl(f(w)-f(u)\bigr)\widehat v.
\]
Since $F(u),F(w)\in A$ and $A$ is an antichain with respect to $\prec_C$, this difference does not belong to $C$.
By Lemma~\ref{lem:cone-separation}, there exists $\xi\in C^*$ with $\norm{\xi}=1$ such that
\begin{equation}\label{eq:graph-separation}
\ip{\xi}{(w-u)+\bigl(f(w)-f(u)\bigr)\widehat v}
\leq0.
\end{equation}
Since $\xi$ is a unit vector in $C^*$, the definition of $\alpha_C(v)$ gives $\ip{\xi}{v}\geq\alpha_C(v)$.
Consequently,
\[
\ip{\xi}{\widehat v}
=\frac{\ip{\xi}{v}}{\norm{v}}
\geq\frac{\alpha_C(v)}{\norm{v}}>0,
\]
where positivity follows from Lemma~\ref{lem:dual-cone}.

Since $\norm{\widehat v}=1$, the decomposition $\xi=P_v(\xi)+\ip{\xi}{\widehat v}\widehat v$ and the Pythagorean theorem give
\[
\norm{\xi}^2=\norm{P_v(\xi)}^2+\ip{\xi}{\widehat v}^2,
\]
Together with $\norm{\xi}=1$, this yields $\norm{P_v(\xi)}^2=1-\ip{\xi}{\widehat v}^2$.
Moreover, $w-u\in v^\perp$ implies $\ip{\widehat v}{w-u}=0$.
Taking the inner product with $w-u$ in the decomposition $\xi=P_v(\xi)+\ip{\xi}{\widehat v}\widehat v$ gives
\begin{equation}\label{eq:graph-inner-product}
\ip{\xi}{w-u}=\ip{P_v(\xi)}{w-u}.
\end{equation}
Expanding~\eqref{eq:graph-separation} gives
\[
\ip{\xi}{w-u}
+\bigl(f(w)-f(u)\bigr)\ip{\xi}{\widehat v}
\leq0.
\]
Substituting ~\eqref{eq:graph-inner-product} into this inequality and rearranging, we obtain
\[
\bigl(f(w)-f(u)\bigr)\ip{\xi}{\widehat v}
\leq-\ip{P_v(\xi)}{w-u}.
\]

By the Cauchy--Schwarz inequality, $-\ip{P_v(\xi)}{w-u} \leq|\ip{P_v(\xi)}{w-u}| \leq\norm{P_v(\xi)}\norm{w-u}$.
Since $\ip{\xi}{\widehat v}>0$, division gives
\[
\begin{aligned}
f(w)-f(u)
&\leq
\frac{\norm{P_v(\xi)}}{\ip{\xi}{\widehat v}}\,
\norm{w-u}\\
&=
\sqrt{\frac{1}{\ip{\xi}{\widehat v}^2}-1}\,
\norm{w-u}\\
&\leq
\sqrt{\frac{\norm{v}^2}{\alpha_C(v)^2}-1}\,
\norm{w-u}.
\end{aligned}
\]
The last inequality follows from $\ip{\xi}{\widehat v}\geq\frac{\alpha_C(v)}{\norm{v}}>0$.

If $f(w)<f(u)$, apply the same argument with $u$ and $w$ interchanged.
This gives the same upper bound for $f(u)-f(w)$, since $\norm{u-w}=\norm{w-u}$.
If $f(w)=f(u)$, the desired inequality is immediate.
Thus~\eqref{eq:graph-lipschitz} holds for every $u,w\in E$, and $f$ is Lipschitz.
\end{proof}

We use the following Lipschitz extension theorem due to McShane~\cite{McShane}.

\begin{theorem}[\cite{McShane}]
\label{thm:mcshane}
Let $V$ be a Euclidean space and let $E\subseteq V$.
Suppose that $f:E\to\R$ is Lipschitz with constant $L\geq 0$. 
Then there exists a function $\widetilde f:V\to\R$ such that
$\widetilde f=f$ on $E$ and
\[
|\widetilde f(u)-\widetilde f(w)|
\leq L\norm{u-w}
\qquad\text{for every }u,w\in V.
\]
\end{theorem}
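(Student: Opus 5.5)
The plan is to use the explicit infimal-convolution formula. If $E=\emptyset$, we take $\widetilde f\equiv0$ and there is nothing to prove. Otherwise we define
\[
\widetilde f(u)=\inf_{w\in E}\bigl(f(w)+L\norm{u-w}\bigr)\qquad(u\in V),
\]
and check three things in order: $\widetilde f$ is real-valued, it agrees with $f$ on $E$, and it is $L$-Lipschitz on $V$.

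\textbf{Finiteness.} Fix a base point $w_0\in E$. For any $w\in E$, the Lipschitz bound gives $f(w)\ge f(w_0)-L\norm{w-w_0}$. The triangle inequality gives $\norm{u-w}\ge\norm{w-w_0}-\norm{u-w_0}$. Combining,
\[
f(w)+L\norm{u-w}\ge f(w_0)-L\norm{u-w_0}.
\]
So the infimum is bounded below, and it is bounded above by the term with $w=w_0$. Hence $\widetilde f(u)\in\R$.

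\textbf{Extension property.} For $u\in E$, taking $w=u$ shows $\widetilde f(u)\le f(u)$. Conversely, for every $w\in E$ the Lipschitz bound gives $f(w)+L\norm{u-w}\ge f(u)$. Therefore $\widetilde f(u)=f(u)$.

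\textbf{Lipschitz bound.} Fix $u,u'\in V$. For each $w\in E$,
\[
\widetilde f(u)\le f(w)+L\norm{u-w}\le f(w)+L\norm{u'-w}+L\norm{u-u'}.
\]
Taking the infimum over $w$ gives $\widetilde f(u)\le\widetilde f(u')+L\norm{u-u'}$. Swapping the roles of $u$ and $u'$ gives the reverse inequality.

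No step here is a real obstacle. The only point needing care is finiteness of the infimum, which the base-point estimate handles; after that, every step is a one-line use of the triangle inequality.
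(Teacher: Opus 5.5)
Your proof is correct, and it is McShane's own infimal-convolution construction $\widetilde f(u)=\inf_{w\in E}\bigl(f(w)+L\norm{u-w}\bigr)$. The paper gives no proof of this statement and simply cites McShane, so your argument supplies the standard proof behind that citation, including the base-point bound (which uses $L\ge0$) that makes the infimum finite.
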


Set
\begin{equation}\label{eq:graph-constant}
L=\sqrt{\frac{\norm{v}^2}{\alpha_C(v)^2}-1}.
\end{equation}
By Lemma~\ref{lem:graph}, the function $f:E\to\R$ is $L$-Lipschitz.
Applying Theorem~\ref{thm:mcshane} with $V=v^\perp$ gives a function $\widetilde f:v^\perp\to\R$ such that $\widetilde f=f$ on $E$ and 
\begin{equation}\label{eq:extension-lipschitz}
|\widetilde f(u)-\widetilde f(w)|
\leq L\norm{u-w}
\qquad\text{for every }u,w\in v^\perp.
\end{equation}

Define $\widetilde F:v^\perp\to\R^n$ by
\begin{equation}\label{eq:extended-graph}
\widetilde F(u)=u+\widetilde f(u)\widehat v
\qquad(u\in v^\perp).
\end{equation}
For every $u\in E$, the identity $\widetilde f(u)=f(u)$ gives $\widetilde F(u)=F(u)$.
Hence
\[
A=\widetilde F(E).
\]
Moreover, $P_v(\widetilde F(u))=u$ for every $u\in v^\perp$.
Thus $\widetilde F(u)=\widetilde F(w)$ implies $u=w$, so $\widetilde F$ is injective.
For $u,w\in v^\perp$, we have
\[
\widetilde F(u)-\widetilde F(w)
=(u-w)+\bigl(\widetilde f(u)-\widetilde f(w)\bigr)\widehat v.
\]
The two terms on the right are perpendicular, and $\norm{\widehat v}=1$.
The Pythagorean theorem and~\eqref{eq:extension-lipschitz} therefore give
\[
\begin{aligned}
\norm{\widetilde F(u)-\widetilde F(w)}^2
&=\norm{u-w}^2
  +|\widetilde f(u)-\widetilde f(w)|^2\\
&\leq(1+L^2)\norm{u-w}^2.
\end{aligned}
\]
Equation~\eqref{eq:graph-constant} gives $\sqrt{1+L^2}=\frac{\norm{v}}{\alpha_C(v)}$.
Taking square roots, we obtain
\begin{equation}\label{eq:extended-graph-lipschitz}
\norm{\widetilde F(u)-\widetilde F(w)}
\leq
\frac{\norm{v}}{\alpha_C(v)}\norm{u-w}
\qquad\text{for every }u,w\in v^\perp.
\end{equation}
In particular, $\widetilde F$ is Lipschitz.

Let $V$ and $W$ be Euclidean spaces.
A map $G:V\to W$ is called \emph{differentiable} at $u\in V$ if there exists a linear map $DG(u):V\to W$ such that
\[
\lim_{\substack{h\to0\\h\in V\setminus\{0\}}}
\frac{\norm{G(u+h)-G(u)-DG(u)[h]}}{\norm{h}}=0.
\]
The linear map $DG(u)$ is called the \emph{derivative} of $G$ at $u$; see~\cite[Definition~3.2]{EvansGariepy}.
For a real-valued function $G:V\to\R$, the derivative is represented by a unique vector $\nabla G(u)\in V$ such that
\[
DG(u)[h]=\ip{\nabla G(u)}{h}
\qquad\text{for every }h\in V.
\]
This vector is called the \emph{gradient} of $G$ at $u$.

A property holds $\Hh^m$-almost everywhere if the set of points where it fails has $\Hh^m$-measure zero.

\begin{theorem}[{\cite[Theorem~3.2]{EvansGariepy}}]
\label{thm:rademacher}
Let $V$ and $W$ be Euclidean spaces, with $\dim V=m$.
If $G:V\to W$ is Lipschitz, then $G$ is differentiable at $\Hh^m$-almost every point of $V$.
\end{theorem}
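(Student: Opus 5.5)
Since this is Rademacher's theorem, I would follow the classical three-step argument. First I would reduce to the scalar case. Identify $V$ with $\R^m$ and $W$ with $\R^N$ by orthonormal bases. Then $G$ is differentiable at $u$ if and only if each coordinate function $G_j=\ip{G}{e_j}$ is. A countable union of $\Hh^m$-null sets is null, and each $G_j$ is Lipschitz with the same constant. So it suffices to treat a Lipschitz $G:\R^m\to\R$ with constant $L$; recall that $\Hh^m$ is Lebesgue measure on $\R^m$.

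\emph{Step 1: directional derivatives exist almost everywhere.} Fix a unit vector $e$. On every line $t\mapsto u+te$, the function $t\mapsto G(u+te)$ is Lipschitz, hence absolutely continuous. By Lebesgue's differentiation theorem it is differentiable at almost every $t$. The set where the limit
\[
D_eG(u)=\lim_{t\to0}\frac{G(u+te)-G(u)}{t}
\]
exists is Borel. It can be written through $\limsup$ and $\liminf$ over rational $t$, which is legitimate because $G$ is continuous. Fubini's theorem, applied along lines parallel to $e$, then shows that its complement is null. Applying this to the coordinate directions defines $\nabla G(u)=(D_{e_1}G(u),\dots,D_{e_m}G(u))$ almost everywhere, and $|\nabla G|\le\sqrt m\,L$.

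\emph{Step 2: linearity in the direction.} I expect this to be the main obstacle. The claim is that for each fixed unit $e$ we have $D_eG(u)=\ip{\nabla G(u)}{e}$ for almost every $u$. I would prove it weakly. For a smooth compactly supported $\zeta$, change variables in the difference quotient to get
\[
\int\frac{G(u+te)-G(u)}{t}\zeta(u)\,du=-\int G(u)\frac{\zeta(u)-\zeta(u-te)}{t}\,du.
\]
The quotients are bounded by $L$, so dominated convergence as $t\to0$ gives $\int D_eG\,\zeta=-\int G\,\partial_e\zeta$. Expanding $\partial_e\zeta=\sum_i e_i\partial_i\zeta$ and using the same identity for each coordinate direction gives $\int D_eG\,\zeta=\int\ip{\nabla G}{e}\zeta$ for all test functions $\zeta$. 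Hence the two bounded functions agree almost everywhere.

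\emph{Step 3: upgrade to full differentiability.} Choose a countable dense set $\{e^k\}$ in the unit sphere. Let $U$ be the full-measure set of points where every $D_{e^k}G$ exists and equals $\ip{\nabla G}{e^k}$. Fix $u\in U$ and $\varepsilon>0$. By compactness, pick finitely many $e^1,\dots,e^K$ forming an $\varepsilon$-net of the sphere, and pick $\delta>0$ such that for $|t|<\delta$ and all $k\le K$,
\[
\Bigl|\frac{G(u+te^k)-G(u)}{t}-\ip{\nabla G(u)}{e^k}\Bigr|<\varepsilon.
\]
For an arbitrary unit $e$, choose $e^k$ with $\norm{e-e^k}<\varepsilon$. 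The Lipschitz bound on $G$ and the bound $|\nabla G|\le\sqrt m L$ then give
\[
\Bigl|\frac{G(u+te)-G(u)}{t}-\ip{\nabla G(u)}{e}\Bigr|\le\varepsilon+L\varepsilon+\sqrt m\,L\varepsilon,
\]
uniformly in $e$. Writing $h=te$ shows that $DG(u)[h]=\ip{\nabla G(u)}{h}$ satisfies the definition of the derivative. This holds on $U$, which completes the proof.
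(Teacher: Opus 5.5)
Your argument is correct and is essentially the standard proof of Rademacher's theorem in Evans--Gariepy (Theorem~3.2), which the paper cites rather than reproves: reduction to scalar $G$, almost-everywhere existence of directional derivatives via Fubini, the identity $D_eG=\ip{\nabla G}{e}$ a.e.\ obtained by integrating against test functions, and the upgrade to full differentiability through a finite $\varepsilon$-net of directions. One small imprecision: on the right-hand side of your Step~2 identity, the limit $t\to0$ follows from uniform convergence of the difference quotients of $\zeta$ on a fixed compact set together with local boundedness of $G$, not from the bound $L$ on the quotients of $G$.
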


By~\eqref{eq:extension-lipschitz}, the function $\widetilde f:v^\perp\to\R$ is Lipschitz.
Theorem~\ref{thm:rademacher} therefore implies that $\widetilde f$ is differentiable at $\Hh^{n-1}$-almost every point of $v^\perp$.

Fix a point $u\in v^\perp$ at which $\widetilde f$ is differentiable.
For every unit vector $h\in v^\perp$, equation~\eqref{eq:extension-lipschitz} gives
\[
\left|
\frac{\widetilde f(u+th)-\widetilde f(u)}{t}
\right|
\leq L
\qquad(t\in\R\setminus\{0\}).
\]
Letting $t\to0$ and using the definition of the gradient,
we obtain
\[
|\ip{\nabla\widetilde f(u)}{h}|\leq L.
\]
If $\nabla\widetilde f(u)\neq0$, choose $h=\frac{\nabla\widetilde f(u)}{\norm{\nabla\widetilde f(u)}}$.
This gives $\norm{\nabla\widetilde f(u)}\leq L$.
The same inequality holds when $\nabla\widetilde f(u)=0$.
Therefore
\begin{equation}\label{eq:gradient-bound}
\norm{\nabla\widetilde f(u)}
\leq
L=\sqrt{\frac{\norm{v}^2}{\alpha_C(v)^2}-1}
\end{equation}
at every point where $\widetilde f$ is differentiable.

By~\eqref{eq:extended-graph}, for every $h\in v^\perp$,
\[
\widetilde F(u+h)-\widetilde F(u)
=h+\bigl(\widetilde f(u+h)-\widetilde f(u)\bigr)\widehat v.
\]
Differentiability of $\widetilde f$ at $u$ gives
\begin{equation}\label{eq:extension-differentiability}
\lim_{\substack{h\to0\\h\in v^\perp\setminus\{0\}}}
\frac{
\left|
\widetilde f(u+h)-\widetilde f(u)
-\ip{\nabla\widetilde f(u)}{h}
\right|
}{\norm{h}}=0.
\end{equation}
Equation~\eqref{eq:extended-graph} and $\norm{\widehat v}=1$ give
\[
\norm{\widetilde F(u+h)-\widetilde F(u)
-h-\ip{\nabla\widetilde f(u)}{h}\widehat v}
=
\left|\widetilde f(u+h)-\widetilde f(u)
-\ip{\nabla\widetilde f(u)}{h}\right|.
\]
Dividing both sides by $\norm{h}$ and using~\eqref{eq:extension-differentiability} shows that the quotient on the left tends to zero as $h\to0$.
The map $h\mapsto h+\ip{\nabla\widetilde f(u)}{h}\widehat v$ is linear.
Thus, by the definition of differentiability, $\widetilde F$ is differentiable at $u$, with derivative
\begin{equation}\label{eq:graph-derivative}
D\widetilde F(u)[h]
=h+\ip{\nabla\widetilde f(u)}{h}\widehat v
\qquad(h\in v^\perp).
\end{equation}

Let $V$ and $W$ be Euclidean spaces with $\dim V=m\leq\dim W$, and let $T:V\to W$ be linear.
Choose an orthonormal basis $e_1,\ldots,e_m$ of $V$.
The \emph{$m$-dimensional Jacobian} of $T$ is
\[
J_m(T)
=
\sqrt{\det\bigl[\ip{T(e_i)}{T(e_j)}\bigr]_{i,j=1}^{m}}.
\]
This value does not depend on the choice of orthonormal basis; see~\cite[Theorem~3.6]{EvansGariepy}.

For a Lipschitz map $G:V\to W$, define
\begin{equation}\label{eq:jacobian-definition}
J_mG(u)=J_m(DG(u))
\end{equation}
at every point $u$ where $G$ is differentiable, and set $J_mG(u)=0$ at the remaining points; see~\cite[Definition~3.6]{EvansGariepy}.

\begin{lemma}\label{lem:graph-jacobian}
At every point $u\in v^\perp$ where $\widetilde f$ is differentiable,
\begin{equation}\label{eq:Jacobian}
J_{n-1}\widetilde F(u)
=
\sqrt{1+\norm{\nabla\widetilde f(u)}^2}
\leq
\frac{\norm{v}}{\alpha_C(v)}.
\end{equation}
In particular, this bound holds at $\Hh^{n-1}$-almost every point of $v^\perp$.
\end{lemma}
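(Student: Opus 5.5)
We compute the Jacobian directly from the derivative formula~\eqref{eq:graph-derivative}. Fix $u\in v^\perp$ at which $\widetilde f$ is differentiable and write $g=\nabla\widetilde f(u)\in v^\perp$. Choose an orthonormal basis $e_1,\dots,e_{n-1}$ of $v^\perp$. By~\eqref{eq:graph-derivative}, $D\widetilde F(u)[e_i]=e_i+\ip{g}{e_i}\widehat v$. Since each $e_i$ is orthogonal to $\widehat v$ and $\norm{\widehat v}=1$, the Gram matrix has entries
\[
\ip{D\widetilde F(u)[e_i]}{D\widetilde F(u)[e_j]}=\delta_{ij}+\ip{g}{e_i}\ip{g}{e_j}.
\]
So the Gram matrix equals $I+cc^{T}$, where $c=(\ip{g}{e_i})_{i=1}^{n-1}$, and $\norm{c}=\norm{g}$ by Parseval because $g\in v^\perp$.

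The only real step is evaluating $\det(I+cc^T)$. We use the matrix determinant lemma, $\det(I+cc^T)=1+c^Tc$. Alternatively, $I+cc^T$ has eigenvalue $1+\norm{c}^2$ on $c$ and eigenvalue $1$ on $c^\perp$ (trivial if $c=0$). Either way $\det(I+cc^T)=1+\norm{g}^2$, so $J_{n-1}\widetilde F(u)=\sqrt{1+\norm{\nabla\widetilde f(u)}^2}$. The definition of $J_{n-1}$ is basis-independent, so our choice of basis is harmless.

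For the inequality, the gradient bound~\eqref{eq:gradient-bound} gives $\norm{\nabla\widetilde f(u)}^2\le L^2=\norm{v}^2/\alpha_C(v)^2-1$. Hence $J_{n-1}\widetilde F(u)\le\sqrt{1+L^2}=\norm{v}/\alpha_C(v)$, which is legitimate since $\alpha_C(v)\le\norm{v}$ by Lemma~\ref{lem:dual-cone}.

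The almost-everywhere statement follows because $\widetilde f$ is Lipschitz by~\eqref{eq:extension-lipschitz}. Theorem~\ref{thm:rademacher}, applied with $V=v^\perp$ of dimension $n-1$, then shows that the differentiability points have full $\Hh^{n-1}$-measure. I expect no genuine obstacle; the determinant identity is the only nontrivial computation.
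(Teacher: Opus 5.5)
Your proof is correct and follows essentially the same route as the paper: compute the Gram matrix of $D\widetilde F(u)$ from \eqref{eq:graph-derivative}, then apply the gradient bound \eqref{eq:gradient-bound}, then Rademacher's theorem. The only cosmetic difference is that the paper chooses the basis with $e_1=\nabla\widetilde f(u)/\norm{\nabla\widetilde f(u)}$. This makes the Gram matrix diagonal directly, which is your eigenvalue alternative, instead of using the matrix determinant lemma on an arbitrary basis.
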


\begin{proof}
Fix a point $u$ where $\widetilde f$ is differentiable.
If $\nabla\widetilde f(u)=0$, equation~\eqref{eq:graph-derivative} gives $D\widetilde F(u)[h]=h$ for every $h\in v^\perp$, so $J_{n-1}\widetilde F(u)=1$.

Suppose that $\nabla\widetilde f(u)\neq0$.
Choose an orthonormal basis $e_1,\ldots,e_{n-1}$ of $v^\perp$ with $e_1=\frac{\nabla\widetilde f(u)}{\norm{\nabla\widetilde f(u)}}$.
By~\eqref{eq:graph-derivative},
\[
D\widetilde F(u)[e_1]
=e_1+\norm{\nabla\widetilde f(u)}\widehat v,
\qquad
D\widetilde F(u)[e_j]=e_j
\quad(2\leq j\leq n-1).
\]
These vectors are mutually perpendicular.
Thus the matrix defining the Jacobian is diagonal, and its determinant is the product of their squared norms.
The first has squared norm $1+\norm{\nabla\widetilde f(u)}^2$, and each of the others has squared norm one.
Consequently,
\[
J_{n-1}\widetilde F(u)
=\sqrt{1+\norm{\nabla\widetilde f(u)}^2}.
\]
This formula also holds when $\nabla\widetilde f(u)=0$.

Finally, equations~\eqref{eq:gradient-bound}
and~\eqref{eq:graph-constant} give
\[
J_{n-1}\widetilde F(u)
=\sqrt{1+\norm{\nabla\widetilde f(u)}^2}
\leq\sqrt{1+L^2}
=\frac{\norm{v}}{\alpha_C(v)}.
\]
The almost-everywhere assertion follows from Theorem~\ref{thm:rademacher}.
\end{proof}

An injective linear map changes Hausdorff outer measure by the factor given by its Jacobian.

\begin{theorem}[{\cite[Lemma~3.1]{EvansGariepy}}]
\label{thm:linear-measure}
Let $V$ and $W$ be Euclidean spaces with $1\leq m=\dim V\leq\dim W$.
Let $T:V\to W$ be an injective linear map.
Then, for every set $E\subseteq V$,
\begin{equation}\label{eq:linear-measure}
\Hh^m(T(E))
=
J_m(T)\Hh^m(E).
\end{equation}
\end{theorem}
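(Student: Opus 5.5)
The plan is to reduce the statement to two facts: Hausdorff outer measure is invariant under isometries, and Lebesgue outer measure on $\R^m$ scales by $|\det|$ under a linear bijection. The link between them is the polar decomposition of $T$.

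\emph{Isometry invariance.} If $O$ is a map between Euclidean spaces with $\norm{O(x)-O(y)}=\norm{x-y}$, then $O$ preserves diameters. Covers of $E$ with sets of diameter at most $\delta$ correspond to such covers of $O(E)$: take images one way, and intersections with $O(V)$ followed by preimages the other way. Hence $\Hh^m_\delta(O(E))=\Hh^m_\delta(E)$ for every $\delta$, and so $\Hh^m(O(E))=\Hh^m(E)$.

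\emph{Polar decomposition.} Let $T^*:W\to V$ be the adjoint of $T$. Then $T^*T$ is symmetric and positive definite on $V$, since $\ip{T^*Th}{h}=\norm{Th}^2>0$ for $h\neq0$ by injectivity. Set $P=(T^*T)^{1/2}$ and $O=TP^{-1}:V\to W$. The map $O$ is a linear isometry, because $\norm{Oh}^2=\ip{T^*TP^{-1}h}{P^{-1}h}=\norm{h}^2$. Since $T=O\circ P$, isometry invariance gives $\Hh^m(T(E))=\Hh^m(P(E))$. In an orthonormal basis $e_1,\dots,e_m$ of $V$, the Gram matrix $[\ip{Te_i}{Te_j}]$ is the matrix of $T^*T$. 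Therefore $J_m(T)=\sqrt{\det(T^*T)}=\det P$, and it remains to show
\[
\Hh^m(P(E))=\det(P)\,\Hh^m(E)
\]
for a symmetric positive definite $P$ on $V$.

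\emph{Diagonalization.} Diagonalize $P=QDQ^{-1}$ with $Q$ orthogonal and $D=\operatorname{diag}(\lambda_1,\dots,\lambda_m)$, where each $\lambda_i>0$. Applying isometry invariance twice reduces the claim to the diagonal map $D$. We use the orthonormal basis to identify $V$ with $\R^m$; this identification is an isometry, so it preserves $\Hh^m$. We then invoke the fact, stated in the introduction, that $\Hh^m$ agrees with Lebesgue outer measure $\mathcal L^m$ on $\R^m$. Lebesgue outer measure is the infimum of $\sum_r\operatorname{vol}(R_r)$ over countable covers by axis-parallel boxes $R_r$. The bijection $D$ maps axis-parallel boxes to axis-parallel boxes, and it multiplies volume by $\lambda_1\cdots\lambda_m=\det D$. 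It therefore maps box covers of $E$ bijectively onto box covers of $D(E)$, which gives $\mathcal L^m(D(E))=\det(D)\,\mathcal L^m(E)$. Since $\det D=\det P$, the formula~\eqref{eq:linear-measure} follows.

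The only delicate step is this final one: transferring the scaling from boxes to outer measure. It relies on the box-cover description of Lebesgue outer measure together with the stated identification $\Hh^m=\mathcal L^m$, and it uses the bijectivity of $D$ so that box covers correspond in both directions. Working with the axis-parallel diagonal map, rather than a general linear map, is what keeps boxes mapped to boxes and avoids any rotation of covering sets.
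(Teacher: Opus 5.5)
Your proof is correct. The paper gives no proof of this statement; it simply quotes Evans--Gariepy. Your argument follows the same skeleton as the cited one: the polar decomposition $T=O\circ P$, invariance of $\Hh^m$ under the linear isometry $O$, the identity $J_m(T)=\det P$, and $\Hh^m=\mathcal L^m$ on $\R^m$.

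The two arguments differ in how they handle the positive definite factor. Evans--Gariepy first compute the ratio $\Hh^m(T(B(x,r)))/\mathcal L^m(B(x,r))=\det P$ on balls, using the change-of-variables formula for Lebesgue measure under linear maps. They then pass to arbitrary sets by differentiating the Radon measure $A\mapsto\Hh^m(T(A))$ with respect to $\mathcal L^m$. You instead diagonalize $P$ in an orthonormal eigenbasis. A positive diagonal map permutes axis-parallel boxes and multiplies their volume by $\det D$, so the box-cover description of outer measure gives the identity for every $E$ at once. (The identification of $V$ with $\R^m$ should be made via that eigenbasis, so that $D$ is diagonal in standard coordinates; this is clearly what you intend.)

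Your route is more elementary and self-contained. It needs neither the linear change of variables for Lebesgue measure nor differentiation of measures, and non-measurable $E$ require no separate regularity argument. The cited route, in exchange, also treats non-injective linear maps, but that case is not needed here.
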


For injective Lipschitz maps, the area formula takes the following form.

\begin{theorem}[{\cite[Theorem~3.8]{EvansGariepy}}]
\label{thm:area-formula}
Let $V$ and $W$ be Euclidean spaces with $1\leq m=\dim V\leq\dim W$.
Let $G:V\to W$ be an injective Lipschitz map.
Then, for every Borel set $M\subseteq V$,
\begin{equation}\label{eq:area-formula}
\Hh^m(G(M))
=
\int_M J_mG(u)\,d\Hh^m(u).
\end{equation}
\end{theorem}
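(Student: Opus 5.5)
The plan is to follow the standard route to the area formula: reduce everything to the linear case, Theorem~\ref{thm:linear-measure}, by cutting $M$ into countably many Borel pieces on which $G$ is uniformly close to a fixed injective linear map. Injectivity of $G$ is what allows the measures of the images of the pieces to be added; without it we would only get a multiplicity-weighted formula.

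\textbf{Step 1: preliminary reductions.}
\begin{itemize}
\item By Theorem~\ref{thm:rademacher}, $G$ is differentiable outside a null set $N$. Since $G$ is Lipschitz with constant $\mathrm{Lip}(G)$, we have $\Hh^m(G(N))\le \mathrm{Lip}(G)^m\Hh^m(N)=0$, so $N$ contributes nothing to either side of \eqref{eq:area-formula}.
\item Write $M$ as a countable union of compact sets together with a null set, using inner regularity of Lebesgue measure on $V$. Lipschitz images of compact sets are compact, so every image set that appears is $\Hh^m$-measurable by Theorem~\ref{thm:borel-hull}.
\item $DG$ is a pointwise limit of difference quotients, so $u\mapsto J_mG(u)$ is Borel measurable and the integral makes sense.
\end{itemize}
Together with injectivity, this lets us work with a countable disjoint Borel decomposition of $M$ and sum the identities piece by piece.

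\textbf{Step 2: the set where $J_mG=0$.} Let $Z=\{J_mG=0\}$; I will show $\Hh^m(G(Z))=0$. Consider the perturbed maps $G_\varepsilon(u)=(G(u),\varepsilon u)\in W\times V$. Each $G_\varepsilon$ is injective with Jacobian positive everywhere it is differentiable, and $J_mG_\varepsilon\le C\varepsilon$ on $Z$. Since $G$ is the composition of $G_\varepsilon$ with the $1$-Lipschitz projection $W\times V\to W$, the positive-Jacobian case below gives
\[
\Hh^m(G(Z))\le \Hh^m(G_\varepsilon(Z))\le C\varepsilon\,\Hh^m(Z\cap B)
\]
on each bounded piece $B$. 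Letting $\varepsilon\to0$ proves the claim.

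\textbf{Step 3: the set where $J_mG>0$ (the main obstacle).} This is the core linearization lemma. Fix $t>1$. I will show that $\{J_mG>0\}$ is covered by countably many Borel sets $E_k$, each paired with an injective linear map $T_k$, such that:
\begin{itemize}
\item $t^{-1}|T_k h|\le |DG(u)h|\le t|T_k h|$ for all $u\in E_k$ and all $h$;
\item $t^{-1}|T_k(u-w)|\le |G(u)-G(w)|\le t|T_k(u-w)|$ for all $u,w\in E_k$.
\end{itemize}

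To build the cover:
\begin{enumerate}
\item Choose a countable dense family of injective linear maps $T$.
\item Use a countable dense set of points, radii $1/j$, and the uniform differentiability estimate $|G(w)-G(u)-DG(u)(w-u)|\le\epsilon|T^{-1}|^{-1}|w-u|$ for $|w-u|<1/j$.
\item Intersect the resulting sets with small balls and disjointify.
\end{enumerate}
The delicate point is obtaining the two-sided bi-Lipschitz comparison from pointwise differentiability alone. This is handled by the smallness of $|w-u|$ relative to $1/j$ together with the lower bound $|DG(u)h|\ge t^{-1}|T h|$.

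With the cover in hand, $G\circ T_k^{-1}$ is $t$-bi-Lipschitz on $T_k(E_k)$. Hence
\[
\Hh^m(G(E_k\cap M))=\Hh^m\bigl(G\circ T_k^{-1}(T_k(E_k\cap M))\bigr)
\]
lies between $t^{\mp m}$ times $\Hh^m(T_k(E_k\cap M))=J_m(T_k)\Hh^m(E_k\cap M)$, the last equality by Theorem~\ref{thm:linear-measure}. Likewise $J_mG$ lies between $t^{\mp m}J_m(T_k)$ on $E_k$.

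\textbf{Step 4: conclusion.} Summing over $k$, which is valid because $G$ is injective so the images $G(E_k\cap M)$ are disjoint and measurable, gives
\[
t^{-2m}\!\int_M J_mG\,d\Hh^m\;\le\;\Hh^m(G(M))\;\le\; t^{2m}\!\int_M J_mG\,d\Hh^m.
\]
Letting $t\downarrow1$ yields \eqref{eq:area-formula}.
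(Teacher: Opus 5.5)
Your outline is correct and matches the proof the paper relies on: the paper does not prove the area formula but quotes Evans--Gariepy's Theorem~3.8, and that proof uses your scheme. It has a countable Borel linearization lemma on $\{J_mG>0\}$, the perturbation $u\mapsto(G(u),\varepsilon u)$ to show $\Hh^m(G(\{J_mG=0\}))=0$, and summation over the pieces, with injectivity of $G$ letting you skip the multiplicity function. One small correction: measurability of the images $G(E_k\cap M)\subseteq W$ needs that $\Hh^m$ is a Borel measure on $W$ itself (Carath\'eodory's criterion for the metric outer measure $\Hh^m$). Theorem~\ref{thm:borel-hull} as stated in the paper does not give this, since it only concerns $\Hh^m$ on the $m$-dimensional space $V$.
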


\section{Cone projection inequalities}
\label{sec:cone-proof}

For sets $S,T\subseteq\R^n$, define
\[
S+T=\{x+y:x\in S,\ y\in T\}.
\]
For a vector $w\in\R^n$, write $S+w=S+\{w\}=\{x+w:x\in S\}$.
This is the translate of $S$ by $w$.

\begin{proof}[Proof of Theorem~\ref{thm:cone}]
Let $C\subsetneq\R^n$ be a nonempty open convex cone.
Fix $v\in C$, set $\widehat v=\frac{v}{\norm{v}}$, and let $A\subseteq\R^n$ be a $k$-antichain with respect to $\prec_C$.
The inequality is immediate if $A=\varnothing$ or $\Hh^{n-1}(P_v(A))=\infty$.
Now we assume that $A\neq\varnothing$ and $\Hh^{n-1}(P_v(A))<\infty$.

We first consider that $k=1$, and set $E=P_v(A)$.
The graph representation and extension constructed in Section~\ref{sec:preliminaries} give an injective Lipschitz map $\widetilde F:v^\perp\to\R^n$ such that $A=\widetilde F(E)$.
By Lemma~\ref{lem:graph-jacobian}, $J_{n-1}\widetilde F(u)\leq\frac{\norm{v}}{\alpha_C(v)}$ at $\Hh^{n-1}$-almost every point of $v^\perp$.

By Theorem~\ref{thm:borel-hull}, there exists a Borel set $M\subseteq v^\perp$ containing $E$ such that $\Hh^{n-1}(M)=\Hh^{n-1}(E)$.
The inclusion $E\subseteq M$ gives $A\subseteq\widetilde F(M)$.
Applying Theorem~\ref{thm:area-formula}, we obtain
\begin{equation}\label{eq:cone-antichain}
\begin{aligned}
\Hh^{n-1}(A)
&\leq\Hh^{n-1}(\widetilde F(M))\\
&=\int_M J_{n-1}\widetilde F(u)\,d\Hh^{n-1}(u)\\
&\leq\frac{\norm{v}}{\alpha_C(v)}\Hh^{n-1}(M)\\
&=\frac{\norm{v}}{\alpha_C(v)}\Hh^{n-1}(P_v(A)).
\end{aligned}
\end{equation}

Now let $k$ be arbitrary.
For each $x\in A$, let $r(x)$ be the largest integer $\ell$ for which there is a chain
\[
x^1\prec_C\cdots\prec_C x^\ell=x
\]
of points in $A$.
The one-point chain shows that $r(x)\geq1$, and the $k$-antichain assumption gives $r(x)\leq k$.

For $j\in\{1,\ldots,k\}$, set
\[
A_j=\{x\in A:r(x)=j\}.
\]
Each $A_j$ is an antichain with respect to $\prec_C$.
Indeed, if $x,y\in A_j$ satisfied $x\prec_C y$, then $r(y)\geq r(x)+1=j+1$, contradicting $r(y)=j$.
These sets partition $A$, and $P_v(A_j)\subseteq P_v(A)$.
By subadditivity and~\eqref{eq:cone-antichain},
\[
\begin{aligned}
\Hh^{n-1}(A)
&\leq\sum_{j=1}^k\Hh^{n-1}(A_j)\\
&\leq\frac{\norm{v}}{\alpha_C(v)}
       \sum_{j=1}^k\Hh^{n-1}(P_v(A_j))\\
&\leq k\frac{\norm{v}}{\alpha_C(v)}
       \Hh^{n-1}(P_v(A)).
\end{aligned}
\]
This proves~\eqref{eq:cone}.

To prove optimality, choose a unit vector $\xi\in C^*$ such that $\ip{\xi}{v}=\alpha_C(v)$.
Such a vector exists by Lemma~\ref{lem:dual-cone}, which also gives $\alpha_C(v)>0$.
Recall that $\alpha_C(v)>0$, and set $\widehat v=\frac{v}{\norm{v}}$.

Define the linear map $G:v^\perp\to\R^n$ by
\begin{equation}\label{eq:extremal-map}
G(u)=u-\frac{\ip{\xi}{u}}{\alpha_C(v)}v
\qquad(u\in v^\perp).
\end{equation}
For $u\in v^\perp$, we have $P_v(u)=u$ and $P_v(v)=0$.
Thus
\[
P_v(G(u))
=P_v(u)-\frac{\ip{\xi}{u}}{\alpha_C(v)}P_v(v)
=u.
\]
Also, $\ip{\xi}{v}=\alpha_C(v)$ gives
\[
\ip{\xi}{G(u)}
=\ip{\xi}{u}
-\frac{\ip{\xi}{u}}{\alpha_C(v)}\ip{\xi}{v}
=0.
\]
Therefore
\begin{equation}\label{eq:extremal-map-identities}
P_v(G(u))=u,
\qquad
\ip{\xi}{G(u)}=0
\qquad(u\in v^\perp).
\end{equation}
In particular, $G$ is injective: if $G(u)=G(w)$, applying $P_v$ gives $u=w$.

To compute the Jacobian of $G$, define $\varphi:v^\perp\to\R$ by
\[
\varphi(u)
=-\frac{\norm{v}}{\alpha_C(v)}\ip{\xi}{u}.
\]
The identity $v=\norm{v}\widehat v$ gives $G(u)=u+\varphi(u)\widehat v$.
For every $u,h\in v^\perp$,
\[
\varphi(u+h)-\varphi(u)
=-\frac{\norm{v}}{\alpha_C(v)}\ip{\xi}{h}.
\]
The vector $\xi-P_v(\xi)$ is parallel to $v$, whereas $h$ is perpendicular to $v$.
Hence $\ip{\xi}{h}=\ip{P_v(\xi)}{h}$, and
\[
\varphi(u+h)-\varphi(u)
=
\ip{-\frac{\norm{v}}{\alpha_C(v)}P_v(\xi)}{h}.
\]
This difference is exactly linear in $h$.
By the definitions of differentiability and the gradient,
\begin{equation}\label{eq:extremal-gradient}
\nabla\varphi(u)
=-\frac{\norm{v}}{\alpha_C(v)}P_v(\xi)
\qquad(u\in v^\perp).
\end{equation}

The orthogonal decomposition $\xi=P_v(\xi)+\ip{\xi}{\widehat v}\widehat v$ and $\norm{\xi}=\norm{\widehat v}=1$ give
\[
1=\norm{P_v(\xi)}^2+\ip{\xi}{\widehat v}^2.
\]
Moreover, $\ip{\xi}{\widehat v}=\frac{\alpha_C(v)}{\norm{v}}$.
Consequently,
\begin{equation}\label{eq:extremal-projection-norm}
\norm{P_v(\xi)}^2
=1-\frac{\alpha_C(v)^2}{\norm{v}^2}.
\end{equation}

The map $G$ is linear, so $DG(u)=G$ at every $u\in v^\perp$.
Its graph representation gives
\[
DG(u)[h]
=h+\ip{\nabla\varphi(u)}{h}\widehat v
\qquad(h\in v^\perp).
\]
This has the same form as the derivative used in the proof of Lemma~\ref{lem:graph-jacobian}.
The Jacobian calculation there, together with~\eqref{eq:extremal-gradient} and~\eqref{eq:extremal-projection-norm}, gives
\begin{equation}\label{eq:extremal-jacobian}
\begin{aligned}
J_{n-1}(G)
&=\sqrt{1+\norm{\nabla\varphi(u)}^2}\\
&=\sqrt{
1+\frac{\norm{v}^2}{\alpha_C(v)^2}\norm{P_v(\xi)}^2
}\\
&=\sqrt{
1+\frac{\norm{v}^2}{\alpha_C(v)^2}
\left(1-\frac{\alpha_C(v)^2}{\norm{v}^2}\right)
}\\
&=\frac{\norm{v}}{\alpha_C(v)}.
\end{aligned}
\end{equation}

Let
\[
D=\{u\in v^\perp:\norm{u}\leq1\}.
\]
This is the closed unit ball in the $(n-1)$-dimensional
Euclidean space $v^\perp$.
It is compact, and
$\Hh^{n-1}(D)=\omega_{n-1}\in(0,\infty)$.

For $j\in\{1,\ldots,k\}$, define
\[
B_j=G(D)+jv=\{G(u)+jv:u\in D\},
\qquad
B=\bigcup_{j=1}^k B_j.
\]
If $x\in B_j$, then $x=G(u)+jv$ for some $u\in D$.
By~\eqref{eq:extremal-map-identities},
\begin{equation}\label{eq:extremal-levels}
\ip{\xi}{x}
=\ip{\xi}{G(u)}+j\ip{\xi}{v}
=j\alpha_C(v).
\end{equation}
If $x\in B_i\cap B_j$ for $1\leq i,j\leq k$,
then~\eqref{eq:extremal-levels} gives
$i\alpha_C(v)=j\alpha_C(v)$.
Because $\alpha_C(v)>0$, this implies $i=j$.
Thus the sets $B_1,\ldots,B_k$ are pairwise disjoint.

Now suppose that $x,y\in B$ satisfy $x\prec_C y$.
Then $y-x\in C$, so~\eqref{eq:strict-duality} gives
\[
\ip{\xi}{y}-\ip{\xi}{x}
=\ip{\xi}{y-x}>0.
\]
The values of $\ip{\xi}{x}$ therefore strictly increase
along every chain in $B$.
By~\eqref{eq:extremal-levels}, these values belong to
$\{\alpha_C(v),2\alpha_C(v),\ldots,k\alpha_C(v)\}$.
Every chain in $B$ consequently has at most $k$ points.
Hence $B$ is a $k$-antichain with respect to $\prec_C$.

For every $u\in D$ and every $j\in\{1,\ldots,k\}$,
\[
P_v(G(u)+jv)=P_v(G(u))+jP_v(v)=u.
\]
As $u$ ranges over $D$, this shows that $P_v(B)=D$.

The map $G$ is continuous because it is linear.
Thus $G(D)$ and each of its translates $B_j$ are compact.
In particular, the sets $B_j$ are Borel and $\Hh^{n-1}$-measurable.
Their disjointness and translation invariance give
\[
\Hh^{n-1}(B)
=\sum_{j=1}^k\Hh^{n-1}(B_j)
=k\Hh^{n-1}(G(D)).
\]
Applying Theorem~\ref{thm:linear-measure} to the injective linear map $G$ and using~\eqref{eq:extremal-jacobian}, we obtain
\[
\Hh^{n-1}(G(D))
=J_{n-1}(G)\Hh^{n-1}(D)
=\frac{\norm{v}}{\alpha_C(v)}\Hh^{n-1}(D).
\]
Combining these identities with $P_v(B)=D$ yields
\[
\Hh^{n-1}(B)
=k\frac{\norm{v}}{\alpha_C(v)}
  \Hh^{n-1}(P_v(B)).
\]
This equality shows that the constant $k\norm{v}/\alpha_C(v)$ is optimal.
\end{proof}

Let $e_1,\ldots,e_n$ denote the standard basis of $\R^n$.
For $C=(0,\infty)^n$, the dual cone is $C^*=[0,\infty)^n$.
For every $v\in(0,\infty)^n$,
\begin{equation}\label{eq:orthant-alpha}
\alpha_C(v)=\min_{j\in[n]}v_j.
\end{equation}
Indeed, every unit vector $\xi\in[0,\infty)^n$ satisfies
\[
\ip{\xi}{v}
=\sum_{j=1}^n\xi_jv_j
\geq
\left(\min_{j\in[n]}v_j\right)\sum_{j=1}^n\xi_j
\geq\min_{j\in[n]}v_j,
\]
where the last inequality follows from $\sum_{j=1}^n\xi_j\geq\norm{\xi}=1$.
Equality holds for $\xi=e_j$ whenever $v_j=\min_{\ell\in[n]}v_\ell$.

Recall that the weighted skewed projection is defined by
\[
\Delta_{i,v}(x)
=\left(x_j-\frac{v_j}{v_i}x_i\right)_{j\neq i}.
\]

\begin{lemma}\label{lem:skewed-measure}
Let $v\in(0,\infty)^n$ and $i\in[n]$.
For every set $E\subseteq\R^n$,
\begin{equation}\label{eq:skewed-measure}
\Hh^{n-1}(\Delta_{i,v}(E))
=
\frac{\norm{v}}{v_i}
\Hh^{n-1}(P_v(E)).
\end{equation}
\end{lemma}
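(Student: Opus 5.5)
Since both $\Delta_{i,v}$ and $P_v$ are linear maps that vanish on the line spanned by $v$, the plan is to factor $\Delta_{i,v}$ through $P_v$ and then apply Theorem~\ref{thm:linear-measure} to an injective linear map $T:v^\perp\to\R^{n-1}$. For any $x\in\R^n$ we have $x-P_v(x)\in\R v$, and $\Delta_{i,v}(v)=0$ because $v_j-\frac{v_j}{v_i}v_i=0$. Hence $\Delta_{i,v}(x)=\Delta_{i,v}(P_v(x))$. Setting $T=\Delta_{i,v}|_{v^\perp}$ therefore gives $\Delta_{i,v}(E)=T(P_v(E))$ for every $E\subseteq\R^n$.

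The first step is to check that $T$ is injective. If $u\in v^\perp$ and $T(u)=0$, then $u_j=\frac{v_j}{v_i}u_i$ for all $j\neq i$, and this also holds trivially for $j=i$. Thus $u=\frac{u_i}{v_i}v\in\R v\cap v^\perp=\{0\}$. Theorem~\ref{thm:linear-measure}, with $m=n-1=\dim v^\perp=\dim\R^{n-1}$, then yields $\Hh^{n-1}(\Delta_{i,v}(E))=J_{n-1}(T)\Hh^{n-1}(P_v(E))$. It remains to show that $J_{n-1}(T)=\norm{v}/v_i$.

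The Jacobian computation is the only real step, and I would do it with a determinant trick rather than an explicit basis of $v^\perp$. Because $T$ maps between spaces of the same dimension $n-1$, $J_{n-1}(T)=|\det T|$ with respect to orthonormal bases. Extend $T$ to the linear map $S:\R^n\to\R^n$ given by $S(x)=(\Delta_{i,v}(x),\ip{x}{\widehat v})$. Relative to the orthogonal decomposition $\R^n=v^\perp\oplus\R\widehat v$ and the splitting $\R^n=\R^{n-1}\times\R$, the map $S$ is block triangular. It sends $v^\perp$ into $\R^{n-1}\times\{0\}$ via $T$, and it sends $\widehat v$ to $(0,1)$. Therefore $|\det S|=|\det T|\cdot1$.

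On the other hand, $S$ can be computed in standard coordinates. Writing $x_i$ last, $S$ is $x\mapsto(x_j-\frac{v_j}{v_i}x_i)_{j\ne i}$ together with $\sum_j\frac{v_j}{\norm{v}}x_j$. The first $n-1$ rows form $[I\mid -w]$ with $w=(v_j/v_i)_{j\neq i}$, and the last row is $\frac{1}{\norm{v}}(v_{-i}^T,v_i)$, where $v_{-i}=(v_j)_{j\ne i}$. Adding $w$ times the first block of columns to the last column gives a lower triangular matrix whose final diagonal entry is
\[
\frac{1}{\norm{v}}\Bigl(v_i+\sum_{j\ne i}\frac{v_j^2}{v_i}\Bigr)=\frac{\norm{v}}{v_i}.
\]
Hence $|\det S|=\norm{v}/v_i$, and so $J_{n-1}(T)=\norm{v}/v_i$. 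I expect the main care to lie in justifying that $J_{n-1}(T)=|\det T|$ and that the block reduction is legitimate with respect to orthonormal bases.
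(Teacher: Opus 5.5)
Your argument is correct. The factorization $\Delta_{i,v}=T\circ P_v$ with $T=\Delta_{i,v}|_{v^\perp}$, the kernel argument for injectivity, and the appeal to Theorem~\ref{thm:linear-measure} are exactly what the paper does. The difference lies in how you compute $J_{n-1}(T)$.

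The paper stays inside $v^\perp$. It first derives $\ip{T(u)}{T(w)}=\ip{u}{w}+\frac{\norm{v}^2}{v_i^2}u_iw_i$. It then picks an orthonormal basis of $v^\perp$ with $b_1=P_v(e_i)/\norm{P_v(e_i)}$, so that only $b_1$ has a nonzero $i$th coordinate. From this it reads off a diagonal Gram matrix with entries $\norm{v}^2/v_i^2,1,\ldots,1$.

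You instead complete $T$ to a square map $S$ on $\R^n$ and compute a single determinant in standard coordinates. Your column reduction and the resulting value $\norm{v}/v_i$ check out.

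The points you flag as needing care are routine:
\begin{itemize}
\item $S$ is in fact block diagonal, not merely triangular. In the orthonormal basis $b_1,\dots,b_{n-1},\widehat v$ of the domain and the standard basis of the target, its matrix is $\operatorname{diag}(M,1)$, where $M$ is the matrix of $T$.
\item Passing to the standard basis of the domain multiplies by an orthogonal matrix, so $|\det|$ is unchanged.
\item Since $M$ is square, $J_{n-1}(T)=\sqrt{\det(M^{\top}M)}=|\det M|$.
\item Permuting coordinates to put $x_i$ last only changes the sign of the determinant.
\end{itemize}

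Your route avoids constructing an adapted basis of $v^\perp$ and is fully mechanical. The paper's route works directly with the Gram-determinant definition of $J_{n-1}$, so it needs no change-of-basis bookkeeping. It also mirrors the Jacobian computations in Lemma~\ref{lem:graph-jacobian} and \eqref{eq:extremal-jacobian}.
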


\begin{proof}
Let $T:v^\perp\to\R^{n-1}$ be the restriction of $\Delta_{i,v}$ to $v^\perp$.
The map $T$ is linear.
To prove injectivity, let $u,w\in v^\perp$ satisfy $T(u)=T(w)$.
Then $u_j-w_j=(\frac{v_j}{v_i})(u_i-w_i)$ for every $j\neq i$, and hence
\[
u-w=\frac{u_i-w_i}{v_i}v.
\]
Taking the inner product with $v$ and using $u-w\in v^\perp$ gives
\[
0=\ip{u-w}{v}
=\frac{u_i-w_i}{v_i}\norm{v}^2.
\]
Thus $u=w$.
Therefore $T$ is injective.

For $u,w\in v^\perp$, the vectors $u-(\frac{u_i}{v_i})v$ and $w-(\frac{w_i}{v_i})v$ have zero $i$th coordinate.
Their remaining coordinates are $T(u)$ and $T(w)$, respectively.
Therefore
\begin{equation}\label{eq:skewed-inner-product}
\begin{aligned}
\ip{T(u)}{T(w)}
&=
\ip{u-\frac{u_i}{v_i}v}
   {w-\frac{w_i}{v_i}v}\\
&=
\ip{u}{w}
+\frac{\norm{v}^2}{v_i^2}u_iw_i,
\end{aligned}
\end{equation}
where the last equality uses $\ip{u}{v}=\ip{w}{v}=0$.

The definition of $P_v$ gives
\[
\norm{P_v(e_i)}^2
=1-\frac{v_i^2}{\norm{v}^2}>0.
\]
Choose an orthonormal basis $b_1,\ldots,b_{n-1}$ of $v^\perp$ with
\[
b_1=\frac{P_v(e_i)}{\norm{P_v(e_i)}}.
\]
For every $u\in v^\perp$, $u_i=\ip{e_i}{u}=\ip{P_v(e_i)}{u}$.
Consequently,
\[
(b_1)_i=\norm{P_v(e_i)},
\qquad
(b_\ell)_i=0
\quad(2\leq\ell\leq n-1).
\]
Equation~\eqref{eq:skewed-inner-product} now shows that $T(b_1),\ldots,T(b_{n-1})$ are mutually perpendicular.
Moreover,
\[
\begin{aligned}
\norm{T(b_1)}^2
&=
1+\frac{\norm{v}^2}{v_i^2}\norm{P_v(e_i)}^2\\
&=
1+\frac{\norm{v}^2}{v_i^2}
\left(1-\frac{v_i^2}{\norm{v}^2}\right)
=\frac{\norm{v}^2}{v_i^2},
\end{aligned}
\]
and $\norm{T(b_\ell)}^2=1$ for $2\leq\ell\leq n-1$.
Thus the matrix defining the Jacobian is diagonal, and
\begin{equation}\label{eq:skewed-jacobian}
J_{n-1}(T)=\frac{\norm{v}}{v_i}.
\end{equation}

For every $x\in\R^n$, the difference $x-P_v(x)$ is parallel to $v$.
Since $\Delta_{i,v}(x+tv)=\Delta_{i,v}(x)$ for every $t\in\R$, we have $\Delta_{i,v}(x)=T(P_v(x))$.
Hence
\[
\Delta_{i,v}(E)=T(P_v(E)).
\]
Applying Theorem~\ref{thm:linear-measure} and~\eqref{eq:skewed-jacobian} gives
\[
\Hh^{n-1}(\Delta_{i,v}(E))
=
J_{n-1}(T)\Hh^{n-1}(P_v(E))
=
\frac{\norm{v}}{v_i}\Hh^{n-1}(P_v(E)).
\]
This proves~\eqref{eq:skewed-measure}.
\end{proof}

\begin{proof}[Proof of Corollary~\ref{cor:weighted-projection}]
Let $A\subseteq\R^n$ be a $k$-antichain with respect to $\ll$.
For $C=(0,\infty)^n$, the relation $\prec_C$ is exactly $\ll$.
Fix $v\in(0,\infty)^n$ and $i\in[n]$.
Theorem~\ref{thm:cone}, equation~\eqref{eq:orthant-alpha} and Lemma~\ref{lem:skewed-measure} give
\[
\begin{aligned}
\Hh^{n-1}(A)
&\leq
k\frac{\norm{v}}{\min_{j\in[n]}v_j}
\Hh^{n-1}(P_v(A))\\
&=
k\frac{\norm{v}}{\min_{j\in[n]}v_j}
\frac{v_i}{\norm{v}}
\Hh^{n-1}(\Delta_{i,v}(A))\\
&=
k\frac{v_i}{\min_{j\in[n]}v_j}
\Hh^{n-1}(\Delta_{i,v}(A)).
\end{aligned}
\]
\end{proof}

\begin{proof}[Proof of Corollary~\ref{cor:individual}]
A weak antichain is an antichain with respect to $\ll$.
Taking $k=1$ and $v=(1,\ldots,1)$ in Corollary~\ref{cor:weighted-projection},
we have $\Delta_{i,v}=\Delta_i$ and $v_i/\min_{j\in[n]}v_j=1$.
Thus
\[
\Hh^{n-1}(A)
\leq\Hh^{n-1}(\Delta_i(A))
\qquad\text{for every }i\in[n].
\]
\end{proof}

\section{Antichains in the unit cube}
\label{sec:cube}

Let $A\subseteq[0,1]^n$, and let $\underline A_i$ be the sets defined in~\eqref{eq:pieces}.
Every point of $A$ has at least one minimum coordinate, so
\[
A=\bigcup_{i=1}^n\underline A_i.
\]
For $x\in\underline A_i$, we have $0\leq x_j-x_i\leq1$ for every $j\neq i$.
Therefore
\[
\Delta_i(\underline A_i)\subseteq[0,1]^{n-1},
\]
and hence
\begin{equation}\label{eq:cube-projection-bound}
\Hh^{n-1}\bigl(\Delta_i(\underline A_i)\bigr)
\leq1
\qquad(i\in[n]).
\end{equation}

Corollary~\ref{cor:individual} now proves the skewed projection conjecture of Engel, Mitsis, Pelekis and Reiher~\cite[Conjecture~5.3]{EMPR}.

\begin{proof}[Proof of Conjecture~\ref{conj:skewed-projection}]
Let $A\subseteq[0,1]^n$ be a weak antichain.
Each $\underline A_i$ is a subset of $A$, so it is also a weak antichain.
By subadditivity and Corollary~\ref{cor:individual},
\[
\begin{aligned}
\Hh^{n-1}(A)
&\leq
\sum_{i=1}^n\Hh^{n-1}(\underline A_i)\\
&\leq
\sum_{i=1}^n
\Hh^{n-1}\bigl(\Delta_i(\underline A_i)\bigr).
\end{aligned}
\]
\end{proof}

We next recover the bound of Pelekis and Vlas\'ak~\cite{PelekisVlasak}.

\begin{corollary}\label{cor:cube}
Let $k$ be a positive integer.
If $A\subseteq[0,1]^n$ is a $k$-antichain with respect to $\preceq$, then
\[
\Hh^{n-1}(A)\leq kn.
\]
\end{corollary}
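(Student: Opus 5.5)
The plan is to follow exactly the route already sketched in the introduction. Let $A\subseteq[0,1]^n$ be a $k$-antichain with respect to $\preceq$.

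\textbf{Step 1: pass to the strict order.} Every chain $x^1\ll\cdots\ll x^{k+1}$ under $\ll$ consists of distinct points. It is also a chain $x^1\preceq\cdots\preceq x^{k+1}$ under $\preceq$. Since $A$ meets every $\preceq$-chain in at most $k$ points, $A$ contains no such $\ll$-chain. Hence $A$ is a $k$-antichain with respect to $\prec_C$ for $C=(0,\infty)^n$.

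\textbf{Step 2: decompose.} Write $A=\bigcup_{i=1}^n\underline A_i$ as in~\eqref{eq:pieces}. Each $\underline A_i\subseteq A$ is again a $k$-antichain with respect to $\ll$, since being a $k$-antichain passes to subsets.

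\textbf{Step 3: apply Corollary~\ref{cor:weighted-projection} to each piece.} Take $v=(1,\ldots,1)$, so that $v_i/\min_j v_j=1$ and $\Delta_{i,v}=\Delta_i$. This gives
\[
\Hh^{n-1}(\underline A_i)\le k\,\Hh^{n-1}\bigl(\Delta_i(\underline A_i)\bigr).
\]

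\textbf{Step 4: bound the projections.} By~\eqref{eq:cube-projection-bound}, $\Delta_i(\underline A_i)\subseteq[0,1]^{n-1}$. Because $\Hh^{n-1}$ coincides with Lebesgue outer measure on $\R^{n-1}$, monotonicity gives $\Hh^{n-1}(\Delta_i(\underline A_i))\le 1$.

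\textbf{Step 5: sum.} Countable subadditivity of the outer measure $\Hh^{n-1}$ then yields
\[
\Hh^{n-1}(A)\le\sum_{i=1}^n\Hh^{n-1}(\underline A_i)\le k\sum_{i=1}^n\Hh^{n-1}\bigl(\Delta_i(\underline A_i)\bigr)\le kn.
\]

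No genuine obstacle remains, since the analytic work is contained in Theorem~\ref{thm:cone}. The only points needing care are:
\begin{enumerate}[label=(\roman*)]
\item the strict-versus-weak order translation in Step 1, where one uses that a $\ll$-chain has distinct points and is therefore a genuine $\preceq$-chain of $k+1$ elements;
\item the identification of $\Hh^{n-1}$ on $\R^{n-1}$ with Lebesgue outer measure in Step 4, which is quoted in the introduction.
\end{enumerate}
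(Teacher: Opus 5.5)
Your proposal is correct and matches the paper's proof step for step: pass from $\preceq$ to $\ll$, split $A$ into the pieces $\underline A_i$, apply Corollary~\ref{cor:weighted-projection} with $v=(1,\ldots,1)$, bound each $\Hh^{n-1}(\Delta_i(\underline A_i))$ by $1$, and sum by subadditivity. Your remark that a $\ll$-chain consists of distinct points, and so is a genuine $\preceq$-chain of $k+1$ elements, spells out a step the paper states in one line.
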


\begin{proof}
Every chain under $\ll$ is also a chain under $\preceq$.
Thus $A$ is a $k$-antichain with respect to $\ll$, and so is each subset $\underline A_i$.
Applying Corollary~\ref{cor:weighted-projection} with $v=(1,\ldots,1)$ gives
\[
\Hh^{n-1}(\underline A_i)
\leq
k\Hh^{n-1}\bigl(\Delta_i(\underline A_i)\bigr)
\qquad(i\in[n]).
\]
By subadditivity and~\eqref{eq:cube-projection-bound},
\[
\Hh^{n-1}(A)
\leq
\sum_{i=1}^n\Hh^{n-1}(\underline A_i)
\leq
k\sum_{i=1}^n
\Hh^{n-1}\bigl(\Delta_i(\underline A_i)\bigr)
\leq kn.
\]
\end{proof}

Janzer~\cite{JanzerContinuous} proved that the bound in Corollary~\ref{cor:cube} is attained when $k=1$.
Campbell~\cite{Campbell} proved that this bound is attained for every positive integer $k$.
We ask for a description of all sets attaining equality.

\medskip
\noindent\textbf{Question.}
For a positive integer $k$, which $\Hh^{n-1}$-measurable $k$-antichains $A\subseteq[0,1]^n$ with respect to $\preceq$ satisfy
\[
\Hh^{n-1}(A)=kn?
\]
\medskip

\textbf{Declaration of AI usage.} During the development of this work, the authors used ChatGPT as a research-assistance tool for exploring possible proof strategies and for language polishing and editing of the manuscript. The resulting arguments were substantially reconstructed, independently verified, and written by the authors.

\section*{Acknowledgements}
The second author was partially supported by the National Natural Science Foundation of China (Nos.\ 12671388 and 12271222).

\end{document}